\let\@@pmod\pmod
\DeclareRobustCommand{\pmod}{\@ifstar\@pmods\@@pmod}
\def\@pmods#1{\mkern4mu({\operator@font mod}\mkern 6mu#1)}
\newtheorem{theorem}{Theorem}
\newtheorem{lemma}[theorem]{Lemma}
\newtheorem{proposition}[theorem]{Proposition}
\newtheorem{definition}[theorem]{Definition}
\newtheorem{conjecture}[theorem]{Conjecture}
\theoremstyle{remark}
\newtheorem{remarks}[theorem]{Remarks}
\newtheorem{remark}[theorem]{Remark}
\numberwithin{theorem}{section}
\numberwithin{equation}{section}
\numberwithin{table}{section}
\newcommand{\Z}{\mathbb{Z}}
\newcommand{\Q}{\mathbb{Q}}
\renewcommand{\a}{\mathfrak{a}}
\newcommand{\p}{\mathfrak{p}}
\newcommand{\OO}{\mathcal{O}}
\newcommand{\PP}[1]{\mathcal{P}_{#1}}
\DeclareMathOperator{\lcm}{lcm}
\begin{document}
\title{The Euclid--Mullin graph}
\author[A. R. Booker]{Andrew R.~Booker}
\thanks{A.~R.~B.\ was supported by EPSRC Grants {\tt EP/H005188/1},
{\tt EP/L001454/1} and {\tt EP/K034383/1}.}
\address{
Howard House\\
University of Bristol\\
Queens Ave\\
Bristol\\
BS8 1SN\\
United Kingdom
}
\email{\tt andrew.booker@bristol.ac.uk}
\author[S. A. Irvine]{Sean A.~Irvine}
\address{
RTG\\
Level 2\\
18 London St\\
PO Box 9480 WMC\\
Hamilton 3240\\
New Zealand
}
\email{\tt sairvin@gmail.com}
\date{}
\begin{abstract}
We introduce the \emph{Euclid--Mullin graph}, which encodes all instances
of Euclid's proof of the infinitude of primes. We investigate structural
properties of the graph both theoretically and numerically; in
particular, we prove that it is not a tree.
\end{abstract}
\maketitle
\section{Introduction}
The {\em Euclid--Mullin sequence\/} begins [1,] 2, 3, 7, 43, 13, 53,
5, 6221671, where each term is the least prime factor of 1 plus the
product of all the preceding terms. As such it can be viewed as a
computational form of Euclid's proof that the number of primes is
infinite. A companion sequence, sometimes referred to as the {\em
second Euclid--Mullin sequence\/} takes the largest prime factor at
each step. These sequences are {\tt A000945} and {\tt A000946} in the
OEIS~\cite{oeis}. Both sequences were introduced by Mullin~\cite{mullin},
who asked whether every prime occurs in these sequences. Mullin's question
has been answered negatively for the second sequence and in fact the
second sequence omits infinitely many primes~\cite{booker,pollack}. The
question for the first sequence remains open.

Here a generalization is considered, where rather than choosing the least
or largest prime factor at each stage, all prime factors are considered.
Since there are now, in general, multiple choices for the next element,
the result is not a single sequence, but a (directed) graph where each
path from the root to a node corresponds to a particular sequence of
primes. Questions asked about Mullin's sequence can now also be asked
about the graph. In particular, does the graph contain every prime?
If it were ever shown that Mullin's original sequence contains every
prime, then the graph would also include every prime.

The graph admits other structural questions. While the graph is obviously
infinite it would be interesting to know how the number of nodes grows at
each level (or, indeed, to determine if it does grow!).  As a first step
in this direction, this paper establishes that the graph is not a tree.

The directed graph $G_n\subseteq(\Z,\Z\times\Z)$ consists of a set of
integer labelled nodes and edges defined by ordered pairs of nodes. $G_n$
can be defined recursively by: $n$ is a node in $G_n$. If $m$ is a node
in $G_n$, then so are all of $mp_i$ where $m+1=\prod_{i=1}^kp_i^{e_i}$,
$e_i>0$, is the unique factorization of $m+1$. Further, $G_n$ has
directed edges $(m,mp_i)$. It is sometimes convenient to think of the
edge $(m,mp_i)$ as being labelled $p_i$. We say, $n$ is the root of the
graph and has level 0. Any node adjacent to $n$ is said to be level 1. In
general, any node reachable by a directed path of $r$ edges is said to
be level $r$. In fact, a path of length $r$ represents a product of $r$
distinct primes. We call $G_1$ the \emph{Euclid--Mullin graph};
its first few levels are shown in Figure~\ref{fig:g1}.

\begin{figure}
\begin{center}
\includegraphics[width=0.6\textwidth]{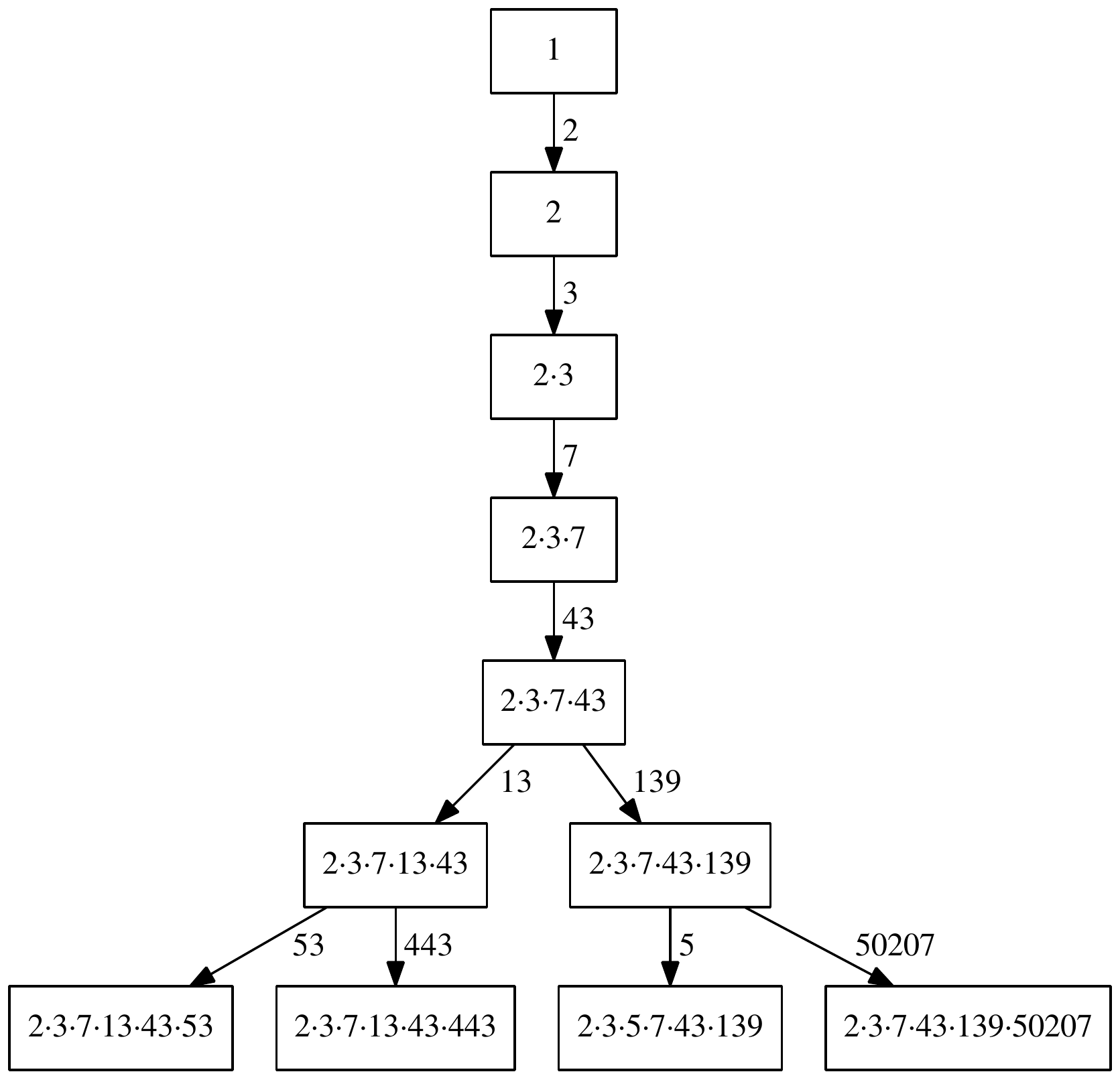}
\end{center}
\caption{\label{fig:g1}$G_1$.}
\end{figure}

\begin{theorem}\label{thm:main}
The Euclid--Mullin graph $G_1$ is not a tree. In particular, each of the following
nodes is connected to $1$ by two distinct paths:
\begin{align*}
2&\cdot 3\cdot 7\cdot 43\cdot 139\cdot 50207\cdot 1607\cdot 38891
\cdot 71609249149971437\cdot 104851\\
&\cdot 5914302068415095755097398828253214149923\\
&\cdot 103\cdot 1750880132687750604376675981842334069\\
&\cdot 103451\cdot 193\cdot 22133\cdot 5587528960270206397663051\\
&\cdot 73\cdot 5\cdot 13\cdot 593
\end{align*}
and
\begin{align*}
2&\cdot 3\cdot 7\cdot 43\cdot 139\cdot 50207\cdot 23\cdot 217733
\cdot 4024572619121\\
&\cdot 539402497343\cdot 72208156847017648587223\cdot 79\\
&\cdot 7269452239696911635939429787229069136737446558564286318153183\\
&\cdot 8689\cdot 107
\cdot 2895777621755988962510175673615781760909999040975810951\\
&\cdot 531543631\cdot 73\cdot 5\cdot 13\cdot 593.
\end{align*}
In each case, the order of the prime factors indicates one path, and the other path
is obtained by swapping $73$ and $593$.
\end{theorem}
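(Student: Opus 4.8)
The plan is to isolate the mechanism producing two paths---a purely local ``swap'' involving the final four primes $73,5,13,593$---and then to exhibit explicit walks from the root that set it up. Recall that distinct primes $q_1,\dots,q_r$ form a path from $1$ exactly when $q_j\mid q_1\cdots q_{j-1}+1$ for every $j$, and that two paths reach the same node precisely when they use the same set of primes. Hence it suffices to produce a node $P$, reachable from $1$, for which appending $73,5,13,593$ and appending $593,5,13,73$ are both valid; both then terminate at the common node $73\cdot5\cdot13\cdot593\cdot P$.

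First I would prove a swap lemma: if $P$ is a node with $P\equiv-1\pmod{13\cdot73\cdot593}$ and $P\equiv3\pmod5$, then both orderings extend $P$ to valid paths. This is a finite check of the eight edge conditions. Four of them, namely $P\equiv-1\pmod{73}$, $P\equiv-1\pmod{593}$, $73P\equiv-1\pmod5$ and $593P\equiv-1\pmod5$, are immediate (using $73\equiv593\equiv3\pmod5$). The other four are automatic from the coincidences $73\cdot5\equiv593\cdot5\equiv1\pmod{13}$, $65\cdot73\equiv1\pmod{593}$ and $65\cdot593\equiv1\pmod{73}$; for example $593\mid73\cdot5\cdot13\cdot P+1$ collapses to $P\equiv-1\pmod{593}$. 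These same congruences force $5,13,73,593\nmid P$, so the four appended primes are new and the two extensions are genuinely distinct paths. The numerical input making this work is $593-73=2^3\cdot5\cdot13$ together with $65\cdot73=8\cdot593+1$ and $65\cdot593=528\cdot73+1$.

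It then remains to produce a node $P$ satisfying the hypotheses, that is, a Euclid--Mullin path whose product satisfies $P\equiv-1\pmod{562757}$ and $P\equiv3\pmod5$, where $562757=13\cdot73\cdot593$. I would find one by searching the graph: walking out from the root, factoring $m+1$ at each node, and tracking the residue of the running product modulo $2813785=5\cdot562757$ so as to steer it into the single target class (while avoiding the primes $5,13,73,593$ themselves). The two prefixes displayed in the theorem are outputs of two such searches; verifying each reduces to checking that every listed prime divides one plus the preceding product and that the terminal product lands in the target class---which I would certify by factoring the relevant values $m+1$ and proving primality of their factors.

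I expect the real obstacle to be this search-and-certification step, not the swap lemma. Landing on a prescribed class modulo $2813785$ is a needle-in-a-haystack problem in a graph whose branching and node sizes are hard to control, and the cost is dominated by integer factorization: the values $m+1$ grow quickly, and the displayed paths already involve prime factors of forty digits and more, each of which must be produced and certified. Once two suitable prefixes are in hand, the swap lemma finishes the proof at once: for each, the displayed ordering and its $73\leftrightarrow593$ swap are two distinct paths to the same node, so the unique-path property fails and $G_1$ is not a tree.
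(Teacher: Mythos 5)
Your proposal is correct and follows essentially the same route as the paper: your ``swap lemma'' is exactly the specialization to $(73,5,13,593)$ of the paper's general machinery (Lemma~\ref{lem:NP} together with Case~III of Proposition~\ref{prop:quadruple}), your target congruence class is the class $a(P)=1125513\pmod*{2813785}$ appearing in Tables~\ref{tab:quadruple} and~\ref{tab:smallmodulus}, and the paper likewise completes the argument by numerically steering paths of $G_1$ into that class and certifying the displayed prefixes. The only (harmless) difference is that the paper derives the swap from its general classification of multiple quadruples rather than verifying the eight divisibility conditions ad hoc, and for certification one only needs primality of the listed edge primes and the stated divisibilities, not full factorizations of each $m+1$.
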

Note that the numbers given in the theorem both have level $21$.
Based on some probabilistic considerations presented in
\S\ref{sec:numerics},
we suspect that any node of lower level is connected to
$1$ by a unique path, but answering this definitively is likely to
remain infeasible for the foreseeable future.

\subsection*{Acknowledgements}
The numerical computations for this paper were primarily carried out
using the resources of Real Time Genomics at Hamilton, New~Zealand, with
additional factorizations performed on BlueCrystal phases 2 and 3 at the
University of Bristol, UK, Michael Rubinstein's {\tt riemann} cluster
at the University of Waterloo, Canada, and by NFS@home.  We thank these
organizations for their support, without which this project would not
have been possible. We also thank the users of {\tt mersenneforum.org},
especially Oscar \"Ostlin, who helped with many hard factorizations.

\section{Multiple $k$-tuples of edges}
Given a positive integer $n$, a path in $G_n$ between $n$ and
$m=p_1\cdots p_kn$ can be identified with the $k$-tuple of edge primes
$(p_1,\ldots,p_k)$.
In this section, we formalize this notion and formulate conditions under
which nodes may be connected by more than one path.
We also establish several theoretical results, including the following:
\begin{itemize}
\item
For $k=3$, we obtain a complete classification of the triples
$(p_1,p_2,p_3)$ that form one side of a loop in some $G_n$,
given as the prime values
of certain polynomials; see Theorem~\ref{thm:fibonacci}.
\item We prove that there is a $k\le13$ such that, for any $q\in\Z_{>0}$,
there are infinitely many $k$-tuples $(p_1,\ldots,p_k)$ that form one side
of a loop in some $G_n$ and satisfy $(p_1\cdots p_k,q)=1$.  Moreover,
any given prime occurs as an edge of a loop of height at most $13$
in some $G_n$; see Theorem~\ref{thm:manypairs}.
\end{itemize}

First, let $\PP{k}$ denote the set of $k$-tuples $(p_1,\ldots,p_k)$,
where each $p_i$ is a prime number and $p_i\ne p_j$ for $i\ne j$.
The symmetric group $S_k$ acts on $\PP{k}$ by permuting the indices;
precisely, for $\pi\in S_k$ we write
$\pi.(p_1,\ldots,p_k)=(q_1,\ldots,q_k)$, where $p_i=q_{\pi(i)}$ for
$i=1,\ldots,k$.
\begin{definition}\label{def:equiv}
Let $P=(p_1,\ldots,p_k), Q=(q_1,\ldots,q_k)\in\PP{k}$.
\begin{enumerate}
\item 
We say that $P$ and $Q$ are \emph{equivalent},
and write $P\sim Q$, if there exists
$\pi\in S_k$ such that $Q=\pi.P$ and
$$
p_1\cdots p_{i-1}\equiv q_1\cdots q_{\pi(i)-1}\pmod*{p_i}\quad
\text{\rm for }i=1,\ldots,k.
$$
\item
The \emph{multiplicity} of $P$, denoted $m(P)$, is the number of
$\pi\in S_k$ such that $P\sim\pi.P$.
\item
We say that $P$ is \emph{multiple} if $m(P)>1$.
\item
We call $p_1\cdots p_k$ the \emph{modulus} of $P$, and denote it by
$|P|$.
\end{enumerate}
\end{definition}
It is straightforward to verify that $\sim$ defines an equivalence
relation on $\PP{k}$.  Its relevance to the graphs $G_n$
is described by the following key lemma.

\begin{lemma}\label{lem:NP}
For $P=(p_1,\ldots,p_k)\in\PP{k}$,
let $N(P)$ denote the set of positive integers
$n$ such that $n$ and $|P|n$
are connected in $G_n$ via edges $p_1,\ldots,p_k$, i.e.\
$$p_1\mid{n+1},\quad p_2\mid{p_1n+1},\quad \ldots,
\quad p_k\mid{p_1\cdots p_{k-1}n+1}.$$
Then:
\begin{enumerate}
\item
$N(P)$ is an arithmetic progression modulo $|P|$, i.e.\
$$
N(P)=\{n\in\Z_{>0}:n\equiv a\pmod*{|P|}\}
$$
for some $a=a(P)\in\Z$ relatively prime to $|P|$.
\item
$Q\in\PP{k}$ is equivalent to $P$ if and only if $N(Q)=N(P)$.
\item
For any $n\in N(P)$,
the paths in $G_n$ between $n$ and $|P|n$ are in one-to-one
correspondence with the equivalence class of $P$. In particular, the
number of such paths is the multiplicity $m(P)$.
\end{enumerate}
\end{lemma}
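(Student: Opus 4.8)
The plan is to reduce all three parts to the Chinese Remainder Theorem by rewriting the divisibility conditions defining $N(P)$ as a system of congruences, one for each prime $p_i$. Since $p_i$ is coprime to $p_1\cdots p_{i-1}$ (the primes being distinct), the condition $p_i\mid p_1\cdots p_{i-1}n+1$ is equivalent to $n\equiv-(p_1\cdots p_{i-1})^{-1}\pmod*{p_i}$, the inverse taken modulo $p_i$. For part~(1) I would observe that each of the $k$ congruences fixes $n$ modulo a distinct prime $p_i$ to a residue that is a unit mod $p_i$; CRT then packages these into a single residue class $a=a(P)$ modulo $|P|=p_1\cdots p_k$, and the coprimality of $a$ to $|P|$ is immediate from the nonvanishing of each residue.

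For part~(2), first note that $N(P)$ nonempty and equal to $N(Q)$ forces the two arithmetic progressions to share a modulus, since equal infinite progressions in $\Z_{>0}$ have equal gaps between consecutive terms; hence $|P|=|Q|$, and as both are squarefree, $Q$ is a reordering of $P$, say $Q=\pi.P$ for the unique $\pi\in S_k$ determined by the distinct primes. With $p_i=q_{\pi(i)}$, I would then compare $a(P)$ and $a(Q)$ one prime at a time: reducing modulo $p_i=q_{\pi(i)}$ gives $a(P)\equiv-(p_1\cdots p_{i-1})^{-1}$ and $a(Q)\equiv-(q_1\cdots q_{\pi(i)-1})^{-1}$, so $a(P)=a(Q)$ holds exactly when $p_1\cdots p_{i-1}\equiv q_1\cdots q_{\pi(i)-1}\pmod*{p_i}$ for every $i$. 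Since $N(P)=N(Q)$ is equivalent to $a(P)=a(Q)$, this is precisely the condition $P\sim Q$ of Definition~\ref{def:equiv}, yielding both directions at once.

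For part~(3), fix $n\in N(P)$. Any path from $n$ to $|P|n$ multiplies $n$ by the product of its prime edge-labels, and since $p_1\cdots p_k$ is squarefree, unique factorization forces these labels to be exactly $p_1,\ldots,p_k$ in some order, so the path is recorded by an ordering $Q=(q_1,\ldots,q_k)$ of the primes of $P$. The path is a valid walk in $G_n$ precisely when $q_j\mid q_1\cdots q_{j-1}n+1$ at every step, which is literally the statement $n\in N(Q)$. As $n\in N(P)\cap N(Q)$ and both sets are single residue classes modulo $|P|=|Q|$, this forces $N(Q)=N(P)$, which by part~(2) means $Q\sim P$; conversely every such $Q$ arises from a genuine path. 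The paths are therefore in bijection with the equivalence class of $P$, and since distinct permutations of distinct primes give distinct tuples, the size of that class equals the number of $\pi$ with $P\sim\pi.P$, namely $m(P)$.

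I expect the only genuinely delicate step to be the index bookkeeping in part~(2): keeping $p_i=q_{\pi(i)}$ straight so that the symmetric comparison $a(P)=a(Q)$, checked prime by prime, lines up with the asymmetric-looking congruence $p_1\cdots p_{i-1}\equiv q_1\cdots q_{\pi(i)-1}\pmod*{p_i}$ in the definition of $\sim$. Everything else is a direct application of CRT once the divisibility conditions are rewritten as unit congruences.
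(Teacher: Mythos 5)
Your proposal is correct and follows essentially the same route as the paper: recast the divisibility conditions as the congruences $n\equiv-(p_1\cdots p_{i-1})^{-1}\pmod*{p_i}$ and apply the Chinese remainder theorem for (1), compare the resulting residue classes prime by prime for (2), and use unique factorization plus parts (1) and (2) for (3). The only cosmetic difference is that in (2) you compare $a(P)$ with $a(Q)$ directly, whereas the paper tests an arbitrary $n\in N(P)$ against the conditions for $N(Q)$ and cancels the invertible $n$; the computations are the same.
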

\begin{proof}\hspace{1pt}
\begin{enumerate}
\item
The conditions on $n$ can be rephrased as the system of congruences
\begin{align*}
n&\equiv -1\pmod{p_1}\\
n&\equiv -p_1^{-1}\pmod{p_2}\\
&\;\;\vdots\\
n&\equiv -(p_1\cdots p_{k-1})^{-1}\pmod{p_k},
\end{align*}
and the solutions form an arithmetic progression, by the Chinese
remainder theorem. Since none of the numbers on the
right-hand side can be congruent to $0$, the elements of $N(P)$
lie in an invertible residue class modulo $|P|$.
\item
Suppose that $P=(p_1,\ldots,p_k)$ and $Q=(q_1,\ldots,q_k)$ are
equivalent. Then there is a permutation $\pi\in S_k$ such that
$Q=\pi.P$. Choose $n\in N(P)$, $j\in\{1,\ldots,k\}$, and set
$i=\pi^{-1}(j)$, so that $p_i=q_j$.
Then,
\begin{equation}\label{eqn:pn1qn1}
0\equiv p_1\cdots p_{i-1}n+1\equiv q_1\cdots q_{j-1}n+1
\pmod{p_i=q_j}.
\end{equation}
Since this holds for every $j$, $n$ is contained in $N(Q)$. Since $n$
was an arbitrary element of $N(P)$, this shows that $N(P)\subseteq
N(Q)$. Applying the argument again with the roles of $P$ and $Q$
reversed, we also get $N(Q)\subseteq N(P)$, and hence $N(P)=N(Q)$.

Conversely, suppose that $N(P)=N(Q)$. By part (1), we must have
$|P|=|Q|$, and hence there is a permutation $\pi\in
S_k$ such that $Q=\pi.P$. Let $n\in N(P)=N(Q)$, $i\in\{1,\ldots,k\}$,
and set $j=\pi(i)$, so that $p_i=q_j$. Then again we obtain
\eqref{eqn:pn1qn1}, and since $n$ is invertible modulo
$|P|=|Q|$, it follows that
$$
p_1\cdots p_{i-1}\equiv q_1\cdots q_{j-1}\pmod{p_i=q_j}.
$$
Since this holds for all $i$, $P$ and $Q$ are equivalent.

\item
Let $P=(p_1,\ldots,p_k)$, $n\in N(P)$, and $m=|P|n$. Suppose
that there is a path in $G_n$ between $n$ and $m$ via edges
$q_1,\ldots,q_l$. Then we have $m=q_1\ldots q_ln$, so that
$p_1\cdots p_k=q_1\ldots q_l$. By unique factorization,
we have $l=k$ and $Q=(q_1,\ldots,q_k)\in\PP{k}$.
By part (1), $N(P)$ and $N(Q)$ are arithmetic progressions with the
same modulus.
Since they also have a common element $n\in N(P)\cap N(Q)$,
they must be equal. By part (2), $P$ and $Q$ are therefore equivalent.
Conversely, if $P$ and $Q$ are equivalent then $N(P)=N(Q)$, so there
is a path in $G_n$ between $n$ and $|Q|n=m$.
\end{enumerate}
\end{proof}

\begin{lemma}
There are no multiple $k$-tuples for $k<3$.
\end{lemma}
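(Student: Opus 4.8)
The plan is to dispatch the two cases $k=1$ and $k=2$ separately, using the characterization of equivalence through the sets $N(P)$ supplied by Lemma~\ref{lem:NP}(2): a permutation $\pi$ contributes to $m(P)$ exactly when $N(\pi.P)=N(P)$. For $k=1$ the symmetric group $S_1$ is trivial, so the only permutation available is the identity; since $\sim$ is reflexive this already gives $P\sim P$, whence $m(P)=1$ and no $1$-tuple is multiple. The entire content of the lemma therefore lies in the case $k=2$, where $S_2=\{e,\tau\}$ with $\tau$ the transposition, and it suffices to show that $P\not\sim\tau.P$ for every $P=(p_1,p_2)\in\PP{2}$.

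For this I would write down the defining congruences for $N(P)$ and $N(\tau.P)$ using Lemma~\ref{lem:NP}(1). The set $N(P)$ is cut out by $n\equiv-1\pmod*{p_1}$ and $n\equiv-p_1^{-1}\pmod*{p_2}$, while $N(\tau.P)$, corresponding to $\tau.P=(p_2,p_1)$, is cut out by $n\equiv-1\pmod*{p_2}$ and $n\equiv-p_2^{-1}\pmod*{p_1}$. Since both are single residue classes modulo $p_1p_2$, the equality $N(P)=N(\tau.P)$ is equivalent to the componentwise matching of these residues modulo $p_1$ and modulo $p_2$. Comparing them gives $-1\equiv-p_2^{-1}\pmod*{p_1}$ and $-p_1^{-1}\equiv-1\pmod*{p_2}$, that is, $p_2\equiv1\pmod*{p_1}$ together with $p_1\equiv1\pmod*{p_2}$.

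The final step is a size comparison, which is where the argument actually bites. The congruence $p_1\equiv1\pmod*{p_2}$ forces $p_2\mid p_1-1$, hence $p_2\le p_1-1<p_1$; symmetrically $p_2\equiv1\pmod*{p_1}$ forces $p_1\mid p_2-1$ and so $p_1<p_2$. These two inequalities are incompatible, so $N(P)\ne N(\tau.P)$ and $m(P)=1$. I do not anticipate any real obstacle here: the only point requiring care is to set up the residue comparison for $N(\tau.P)$ with the indices correctly permuted, after which the contradiction is immediate.
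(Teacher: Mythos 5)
Your proof is correct and follows essentially the same route as the paper: the paper reads the two congruences $p_2\equiv1\pmod*{p_1}$ and $p_1\equiv1\pmod*{p_2}$ directly off Definition~\ref{def:equiv} rather than via $N(P)=N(\tau.P)$, but by Lemma~\ref{lem:NP}(2) these are the same conditions, and the concluding size contradiction $p_1<p_2<p_1$ is identical.
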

\begin{proof}
This is obvious for $k=1$. For $k=2$, the only non-trivial possibility
is that $(p_1,p_2)$ is equivalent to $(q_1,q_2)=(p_2,p_1)$. Then by
Definition~\ref{def:equiv} we have
\begin{align*}
1&\equiv q_1=p_2\pmod*{p_1}\\
p_1&\equiv 1\pmod*{p_2},
\end{align*}
so that $p_1<p_2<p_1$, which is impossible.
\end{proof}

\subsection{Multiple triples}
\begin{proposition}\label{prop:triple}
Let $P=(p_1,p_2,p_3)\in\PP{3}$. Then $m(P)>1$ if and only if
\begin{equation}\label{eqn:triple}
p_2(p_1+p_3)\equiv1\pmod*{p_1p_3}
\quad\mbox{and}\quad
p_1\equiv p_3\pmod*{p_2}.
\end{equation}
In this case, $m(P)=2$ and the equivalence class of $P$ is
$\{(p_1,p_2,p_3),(p_3,p_2,p_1)\}$.
\end{proposition}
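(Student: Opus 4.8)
The plan is to use Lemma~\ref{lem:NP}(2) to convert the computation of $m(P)$ into a finite check over the six orderings of $\{p_1,p_2,p_3\}$. Since the $p_i$ are distinct, the map $\pi\mapsto\pi.P$ is a bijection from $S_3$ onto the set of orderings $Q=(q_1,q_2,q_3)$ of the multiset $\{p_1,p_2,p_3\}$, and by Lemma~\ref{lem:NP}(2) we have $P\sim\pi.P$ exactly when $N(\pi.P)=N(P)$. Thus $m(P)$ equals the number of orderings $Q$ with $N(Q)=N(P)$. Writing $\Pi_Q(p)$ for the product of the primes appearing before $p$ in the ordering $Q$, part~(1) of the lemma shows (after taking inverses modulo $p$) that $N(Q)=N(P)$ holds if and only if $\Pi_Q(p)\equiv\Pi_P(p)\pmod*{p}$ for each of the three primes $p$. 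For the base ordering these predecessor products are $\Pi_P(p_1)=1$, $\Pi_P(p_2)=p_1$, and $\Pi_P(p_3)=p_1p_2$.

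Next I would run through the five non-identity orderings one at a time, reading off the three congruences in each case. In each of the two orderings obtained by transposing an adjacent pair, namely $(p_1,p_3,p_2)$ and $(p_2,p_1,p_3)$, and in each of the two $3$-cycles $(p_2,p_3,p_1)$ and $(p_3,p_1,p_2)$, the resulting congruences force, after combining two of them by the Chinese remainder theorem in the case of the $3$-cycles, a relation of the shape $x\equiv1\pmod*{y}$ together with $y\equiv1\pmod*{x}$, where $x$ and $y$ are coprime integers at least $2$. For instance, the transposition $(p_2,p_1,p_3)$ requires $p_1\equiv1\pmod*{p_2}$ and $p_2\equiv1\pmod*{p_1}$, while the $3$-cycle $(p_2,p_3,p_1)$ requires $p_1\equiv1\pmod*{p_2p_3}$ and $p_2p_3\equiv1\pmod*{p_1}$. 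Such a pair is impossible, since it would force $x>y>x$. Hence none of these four orderings can satisfy $N(Q)=N(P)$, and the only candidate besides the identity is the full reversal $(p_3,p_2,p_1)$.

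For the reversal the predecessor products are $\Pi(p_3)=1$, $\Pi(p_2)=p_3$, and $\Pi(p_1)=p_2p_3$, so $N((p_3,p_2,p_1))=N(P)$ is equivalent to the three congruences $p_1p_2\equiv1\pmod*{p_3}$, $p_1\equiv p_3\pmod*{p_2}$, and $p_2p_3\equiv1\pmod*{p_1}$. The middle one is already the second condition in \eqref{eqn:triple}. For the other two I would combine them via the Chinese remainder theorem: modulo $p_1$ one has $p_2(p_1+p_3)\equiv p_2p_3$, and modulo $p_3$ one has $p_2(p_1+p_3)\equiv p_1p_2$, so the single congruence $p_2(p_1+p_3)\equiv1\pmod*{p_1p_3}$ is equivalent to the pair $p_1p_2\equiv1\pmod*{p_3}$ and $p_2p_3\equiv1\pmod*{p_1}$. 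This recovers exactly the first condition of \eqref{eqn:triple}.

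Putting the cases together, the reversal is equivalent to $P$ precisely when \eqref{eqn:triple} holds, while the four other non-identity orderings never are. Therefore $m(P)>1$ if and only if \eqref{eqn:triple} holds, and in that case the equivalence class of $P$ consists of exactly the identity ordering and the reversal, giving $m(P)=2$ and equivalence class $\{(p_1,p_2,p_3),(p_3,p_2,p_1)\}$. The only real work is the systematic elimination of the four orderings and the Chinese remainder recombination in the reversal case; I do not expect either to present a genuine obstacle, so the proof should be short once the six cases are laid out cleanly.
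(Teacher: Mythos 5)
Your proof is correct and follows essentially the same route as the paper: both arguments reduce to checking the defining congruences for each non-identity permutation in $S_3$, eliminating four of them via a contradiction of the form $x\equiv1\pmod*{y}$, $y\equiv1\pmod*{x}$ with $x,y>1$, and identifying the reversal $(p_3,p_2,p_1)$ with the system \eqref{eqn:triple}. The only cosmetic differences are that you route through $N(Q)=N(P)$ via Lemma~\ref{lem:NP}(2) and check all five orderings explicitly, whereas the paper works directly from Definition~\ref{def:equiv} and uses the no-multiple-pairs lemma to discard two cases up front.
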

\begin{proof}
Suppose that $P=(p_1,p_2,p_3)$ is equivalent to
$Q=(q_1,q_2,q_3)=\pi.P$ for some non-trivial $\pi\in S_3$. Since there
are no multiple pairs, we must have $p_1\ne q_1$ and $p_3\ne q_3$, so
$\pi\in\{(13),(123),(132)\}$.

First suppose that $\pi$ is a $3$-cycle.
By reversing the roles of $P$ and $Q$ if
necessary, we may assume that $\pi=(123)$.
Then $(p_1,p_2,p_3)=(q_2,q_3,q_1)$, so
by Definition~\ref{def:equiv} we have
\begin{align*}
1&\equiv q_1=p_3\pmod*{p_1}\\
p_1&\equiv q_1q_2=p_1p_3\pmod*{p_2}
\Longrightarrow 1\equiv p_3\pmod*{p_2}\\
p_1p_2&\equiv 1\pmod*{p_3}.
\end{align*}
Thus, $p_3\equiv1\pmod*{p_1p_2}$ and $p_1p_2\equiv1\pmod*{p_3}$, which
is impossible.

The only remaining choice is $\pi=(13)$. Then
$(p_1,p_2,p_3)=(q_3,q_2,q_1)$, and we have
\begin{align*}
1&\equiv q_1q_2=p_2p_3\pmod*{p_1}\\
p_1&\equiv q_1=p_3\pmod*{p_2}\\
p_1p_2&\equiv 1\pmod*{p_3},
\end{align*}
which is equivalent to the system \eqref{eqn:triple}.
Conversely, the steps above are clearly reversible, so that any
$(p_1,p_2,p_3)$ satisfying \eqref{eqn:triple} is equivalent to
$(p_3,p_2,p_1)$.

Finally, since $(13)$ is the only non-trivial permutation that can
relate equivalent triples, any multiple $P\in\PP{3}$ must have $m(P)=2$
and equivalence class $\{P,(13).P\}$.
\end{proof}

Table~\ref{tab:triple} shows the first few solutions
to \eqref{eqn:triple} with $p_1<p_3$, ordered by modulus.
\begin{table}[h!]
\begin{center}
\begin{tabular}{r|l|l}
$P$ & $|P|$ & $a(P)$\\ \hline
$(2,3,5)$ & $30$ & $19$\\
$(3,2,5)$ & $30$ & $29$\\
$(7,5,17)$ & $595$ & $237$ \\
$(211,197,2969)$ & $123412423$ & $114015537$\\
$(601,577,14449)$ & $5010580873$ & $4793484647$\\
$(8191,8101,737281)$ & $48922495303771$ & $48372940054709$\\
$(22921,21169,276949)$ & $134379711825901$ & $123251758931063$
\end{tabular}
\end{center}
\caption{\label{tab:triple}Multiple triples $P=(p_1,p_2,p_3)$ with $p_1<p_3$}
\end{table}

\subsubsection{Integer triples}
Let us temporarily drop the restriction that $p_1$, $p_2$ and
$p_3$ be prime, and consider all solutions to \eqref{eqn:triple} in
integers. Then it turns out that we can give a complete classification.
In order to state it, we recall that the \emph{Fibonacci polynomials}
$F_n(x)$ are defined by the recurrence
$$
F_0(x)=0,\quad F_1(x)=1,\quad
\mbox{and}\quad F_n(x)=xF_{n-1}(x)+F_{n-2}(x)\quad
\mbox{for }n\ge2,
$$
generalizing the usual Fibonacci numbers $F_n=F_n(1)$.
By convention we extend the definition to negative indices by defining
$F_{-n}(x)=F_n(-x)=(-1)^{n-1}F_n(x)$.
\begin{theorem}\label{thm:fibonacci}
Let $(p_1,p_2,p_3)\in\Z^3$. Then $(p_1,p_2,p_3)$ satisfies
\eqref{eqn:triple} if and only if one of the following holds
for some $n,x\in\Z$ and $\delta\in\{\pm1\}$:
\begin{equation}\label{eqn:parametric}
(p_1,p_2,p_3)=\begin{cases}
\delta(F_{n-1}(x)+F_n(x),F_{-n}(x),F_n(x)+F_{n+1}(x))\\
\delta(F_n(x),F_{-n}(x)+F_{-(n+1)}(x),F_{n+1}(x))\\
\delta(1,x,1)\\
\delta(x,1,1-x).
\end{cases}
\end{equation}
\end{theorem}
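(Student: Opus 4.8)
The plan is to classify all integer solutions to the two congruences in \eqref{eqn:triple} by first converting the congruences into explicit divisibility equations, then recognizing the resulting system as a continued-fraction/matrix recurrence governed by the Fibonacci polynomials.

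First I would rewrite \eqref{eqn:triple} without the modular notation. The condition $p_1\equiv p_3\pmod*{p_2}$ says $p_3=p_1+p_2 y$ for some integer $y$. The first condition, $p_2(p_1+p_3)\equiv1\pmod*{p_1p_3}$, splits (since $p_1,p_3$ need not be coprime a priori, but the congruence forces $\gcd(p_1,p_3)\mid 1$, so they are coprime) into $p_2 p_3\equiv1\pmod*{p_1}$ and $p_1 p_2\equiv1\pmod*{p_3}$; these are exactly the two reduced conditions appearing in the proof of Proposition~\ref{prop:triple}. Writing these as equalities introduces auxiliary integers, and the key observation will be that the data $(p_1,p_2,p_3)$ together with the quotients is governed by a $2\times2$ integer matrix of determinant $\pm1$, i.e.\ an element of $\mathrm{GL}_2(\Z)$ or its relevant subgroup. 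This is the natural home for Fibonacci polynomials, since $\begin{pmatrix}x&1\\1&0\end{pmatrix}^n=\begin{pmatrix}F_{n+1}(x)&F_n(x)\\F_n(x)&F_{n-1}(x)\end{pmatrix}$, and the identities $F_{n-1}F_{n+1}-F_n^2=(-1)^n$ and $F_{-n}(x)=(-1)^{n-1}F_n(x)$ (as given in the statement) will do the bookkeeping.

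The main structural step is to set $x=p_2$ (so that $x$ in the parametrization is the middle coordinate, up to sign $\delta$) and interpret the system as saying that $(p_1,p_3)$ is a solution of a Pell-like / unimodular relation determined by $x$. Concretely, I would show that the congruences are equivalent to the single matrix identity asserting that some vector built from $(p_1,p_3)$ lies in the orbit of the base solutions $(1,1)$, $(1,1-x)$, etc., under powers of $\begin{pmatrix}x&1\\1&0\end{pmatrix}$; reading off the entries then produces precisely the four cases of \eqref{eqn:parametric}. The two ``generic'' Fibonacci cases correspond to the two parities of $n$ (or equivalently the sign of the determinant $(-1)^n$ in the unimodular relation), while the two degenerate cases $\delta(1,x,1)$ and $\delta(x,1,1-x)$ arise as the boundary/small-index specializations (essentially $n=0$ and $n=\pm1$) where one of the Fibonacci polynomials vanishes or equals $1$, and must be split off because there the continued-fraction expansion terminates early.

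For the converse direction I would simply substitute each of the four parametric families back into \eqref{eqn:triple} and verify the two congruences using the Fibonacci identities (the determinant relation gives the first congruence, and $F_{-n}(x)=(-1)^{n-1}F_n(x)$ together with the recurrence gives the second); this is a routine but slightly tedious computation that I would not grind through here. The hard part, and the step where care is genuinely needed, is the \emph{completeness} of the forward direction: showing that the continued-fraction/Euclidean expansion driven by the two congruences always terminates in one of the four listed normal forms, and in particular correctly handling the sign ambiguity $\delta$ and the overlap between the generic and degenerate cases so that no solution is missed and the case division is exhaustive. Once the reduction to the matrix recurrence is set up cleanly, however, the enumeration of stopping configurations should be forced.
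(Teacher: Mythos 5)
There is a genuine gap, and it sits exactly where you flag ``the hard part.'' Your structural step is also misaligned with what the parametrization actually says: you propose to take $x=p_2$, but in the first family of \eqref{eqn:parametric} the middle coordinate is $F_{-n}(x)=\pm F_n(x)$, which is a degree-$(|n|-1)$ polynomial in $x$, not $x$ itself (e.g.\ for $n=3$ the family is $(x^2+x+1,\,x^2+1,\,x^3+x^2+2x+1)$, so $p_2=x^2+1$). The parameter $x$ is in fact one of the two auxiliary quotients hidden in the congruences: writing $p_3-p_1=qp_2$ and $p_2(p_1+p_3)=1+rp_1p_3$, the first family arises when $r=\pm1$ (with $x=q$) and the second when $q=\pm1$ (with $x=r$). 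In particular the two generic families do \emph{not} correspond to the two parities of $n$ under a single unimodular recurrence, as you assert; both parities occur inside each family, and the real dichotomy is which of $q,r$ is a unit. A single orbit under powers of $\begin{pmatrix}x&1\\1&0\end{pmatrix}$ cannot see this two-parameter structure.

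Consequently, the completeness claim -- that the ``Euclidean expansion driven by the two congruences always terminates in one of the four listed normal forms'' -- is not forced by anything you have set up, and it is precisely the nontrivial content of the theorem. One must rule out solutions with $\min(|q|,|r|)>1$. The paper does this by passing to $s=r(p_1+p_3)-2p_2$ and $d=(qr)^2+4$, obtaining the Pell-type equation $s^2-dp_2^2=-4r$, i.e.\ an element of norm $-r$ in the order $\OO=\Z\bigl[\frac{d+\sqrt{d}}2\bigr]$, and then invoking the reduction theory of primitive ideals in quadratic orders to show that when $|q|,|r|\ne1$ there is no primitive principal $\OO$-ideal of norm $|r|$ (since $|r|<\sqrt{d}/2$ forces such an ideal to be reduced, and the principal cycle consists of $\OO$ alone). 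You would need either this argument or an explicit, verified descent map with a proof that it strictly decreases some well-ordered quantity and that its fixed points are exactly the four base cases; neither is supplied, and without one the forward direction is unproven. The coprimality observation and the converse (substitution plus the identities $F_{n+1}F_{n-1}-F_n^2=(-1)^n$ and $F_{n+1}-F_{n-1}=xF_n$) are fine and match the paper.
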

\begin{proof}
The Fibonacci polynomials are given by the following explicit formula:
\begin{equation}\label{eqn:binet}
F_n(x)=\frac{\left(\frac{x+\sqrt{x^2+4}}2\right)^n
-\left(\frac{x-\sqrt{x^2+4}}2\right)^n}{\sqrt{x^2+4}}.
\end{equation}
Using this one can verify that
$$
F_{n+1}(x)F_{n-1}(x)=F_n(x)^2+(-1)^n,
$$
and combined with the recurrence
identity $F_{n+1}(x)-F_{n-1}(x)=xF_n(x)$ we see that if
$$
(p_1,p_2,p_3)=\delta(F_{n-1}(x)+F_n(x),F_{-n}(x),F_n(x)+F_{n+1}(x))
$$
then
$$
p_2(p_1+p_3)=1+(-1)^{n-1}p_1p_3
\quad\mbox{and}\quad
p_3-p_1=(-1)^{n-1}xp_2.
$$
Similarly, we obtain the identity
$$
F_{n+1}(x)^2-F_n(x)^2=xF_n(x)F_{n+1}(x)+(-1)^n,
$$
from which it follows that if
$$
(p_1,p_2,p_3)=\delta(F_n(x),F_{-n}(x)+F_{-(n+1)}(x),F_{n+1}(x))
$$
then
$$
p_2(p_1+p_3)=1+(-1)^nxp_1p_3
\quad\mbox{and}\quad
p_3-p_1=(-1)^np_2.
$$
Thus, in either case, $(p_1,p_2,p_3)$ is a solution to \eqref{eqn:triple}.
The final two solutions are straightforward to verify directly.

Now suppose that $(p_1,p_2,p_3)\in\Z^3$ satisfies \eqref{eqn:triple},
and write
\begin{equation}\label{eqn:triple2}
p_3-p_1=qp_2,\quad
p_2(p_1+p_3)=1+rp_1p_3
\end{equation}
for some $q,r\in\Z$.  If $p_1p_2p_3qr=0$ then it is easy to see that
either $p_1p_3=1$ or $p_2(p_1+p_3)=1$, and all such solutions are
described by the third and fourth lines of \eqref{eqn:parametric}.
Otherwise $q$ and $r$ are uniquely determined and non-zero.

Next, set
\begin{equation}\label{eqn:sddef}
s=r(p_1+p_3)-2p_2\quad\mbox{and}\quad
d=(qr)^2+4.
\end{equation}
Then $d$ is not a square, and
a computation shows that $s$ and $p_2$ are related by the
Pell-type equation
\begin{equation}\label{eqn:norm}
s^2-dp_2^2=-4r.
\end{equation}
In other words, $\frac{s+p_2\sqrt{d}}2$ is an element of norm $-r$ in the
quadratic order $\OO=\Z\bigl[\frac{d+\sqrt{d}}2\bigr]$.
(Note that $\OO$ need not be the maximal order in $\Q(\sqrt{d})$.)

If $r=\pm1$ then \eqref{eqn:norm} is just the unit equation for $\OO$. It
is easy to see that $\frac{q+\sqrt{d}}2$ is a fundamental unit (of norm
$-1$), so the general solution of \eqref{eqn:norm} in this case is given by
$$
\frac{s+p_2\sqrt{d}}2=\delta\left(\frac{q+\sqrt{d}}2\right)^n
$$
for $\delta\in\{\pm1\}$ and $n\in\Z$ with $(-1)^{n-1}=r$. Thus,
$$
p_2=\delta\frac{\left(\frac{q+\sqrt{d}}2\right)^n
-\left(\frac{q-\sqrt{d}}2\right)^n}{\sqrt{d}}
=\delta F_n(q)
\quad\mbox{and}\quad
s=\delta\left[\left(\frac{q+\sqrt{d}}2\right)^n
+\left(\frac{q-\sqrt{d}}2\right)^n\right]
=\delta L_n(q),
$$
where $L_n(x)=F_{n+1}(x)+F_{n-1}(x)$ is the Lucas polynomial.
Recalling the definition of $s$, we have
$$
p_1+p_3=\delta'(L_n(q)+2F_n(q)),
$$
where $\delta'=(-1)^{n-1}\delta$.
Together with $p_3-p_1=qp_2=(-1)^{n-1}\delta'qF_n(q)$, this yields
$$
p_1=\delta'\frac{L_n(q)+2F_n(q)-(-1)^{n-1}qF_n(q)}2,
\quad
p_3=\delta'\frac{L_n(q)+2F_n(q)+(-1)^{n-1}qF_n(q)}2.
$$
From the identities
$$
L_n(x)-xF_n(x)=2F_{n-1}(x),\quad
L_n(x)+xF_n(x)=2F_{n+1}(x)\quad\mbox{and}\quad
(-1)^{n-1}F_n(x)=F_{-n}(x),
$$
we get
$$
(p_1,p_2,p_3)=\delta'(F_n(q)+F_{n-1}(q),F_{-n}(q),F_n(q)+F_{n+1}(q))
$$
if $n$ is odd, and
\begin{align*}
(p_1,p_2,p_3)&=\delta'(F_n(q)+F_{n+1}(q),F_{-n}(q),F_n(q)+F_{n-1}(q))\\
&=\delta'(F_{-n}(-q)+F_{-n-1}(-q),F_n(-q),F_{-n}(-q)+F_{-n+1}(-q))
\end{align*}
if $n$ is even. In either case, this is in the form of the first line of
\eqref{eqn:parametric}.

Next suppose that $q=\pm1$. Since
$\frac{r-2+\sqrt{d}}2\in\OO$ has norm $-r$,
we get a family of solutions defined by
\begin{equation}\label{eqn:normrfamily}
\frac{s+p_2\sqrt{d}}2=
\delta\frac{r-2+\sqrt{d}}2\left(\frac{r+\sqrt{d}}2\right)^n
\end{equation}
for $\delta\in\{\pm1\}$ and $n\in2\Z$. Thus,
\begin{align*}
p_2&=\delta\frac{\frac{r-2+\sqrt{d}}2
\left(\frac{r+\sqrt{d}}2\right)^n
-\frac{r-2-\sqrt{d}}2
\left(\frac{r-\sqrt{d}}2\right)^n}{\sqrt{d}}
=\delta\frac{(r-2)F_n(r)+L_n(r)}2\\
&=\delta(F_{n+1}(r)-F_n(r))=\delta(F_{-(n+1)}(r)+F_{-n}(r)),\\
s&=\delta\left[\frac{r-2+\sqrt{d}}2\left(\frac{r+\sqrt{d}}2\right)^n
+\frac{r-2-\sqrt{d}}2\left(\frac{r-\sqrt{d}}2\right)^n\right]\\
&=\delta\frac{(r-2)L_n(r)+dF_n(r)}2
\end{align*}
and
$$
p_1+p_3=\frac{s+2p_2}r=\delta\frac{(r+2)F_n(r)+L_n(r)}2
=\delta(F_{n+1}(r)+F_n(r)).
$$
Combining this with $p_3-p_1=qp_2$, we obtain
$$
(p_1,p_2,p_3)=\delta(F_n(r),F_{-n}(r)+F_{-(n+1)}(r),F_{n+1}(r))
$$
if $q=1$ and
\begin{align*}
(p_1,p_2,p_3)&=\delta(F_{n+1}(r),F_{-n}(r)+F_{-(n+1)}(r),F_n(r))\\
&=\delta(F_{-n-1}(-r),F_{n+1}(-r)+F_n(-r),F_{-n}(-r))
\end{align*}
if $q=-1$. In either case, this is in the form of the second line of
\eqref{eqn:parametric}.

In the case just presented,
it is not obvious that we
obtain all solutions in this manner, but we now proceed to show that
this is indeed the case.
Let us assume first that $4\nmid r$, and
let $\a=\left(\frac{s+p_2\sqrt{d}}2\right)\OO$ be the $\OO$-ideal
associated to the pair $(s,p_2)$.
Then $s+p_2\sqrt{d}\equiv0\pmod*{\a}$, and by \eqref{eqn:sddef} we
have $s+2p_2\equiv0\pmod*{\a}$. It follows from \eqref{eqn:triple2}
that $p_2$ is invertible modulo $r$, so we conclude that
$\sqrt{d}\equiv2\pmod*{\a}$.

Now if $p$ is an odd prime factor of $r$, then
from \eqref{eqn:norm} we
see that $\left(\frac{d}{p}\right)=1$. Thus,
$p\OO$ splits as a product of two prime ideals that are distinguished by
the reduction of $\sqrt{d}$, i.e.\ there is a unique prime ideal
$\p\subseteq\OO$ with norm $p$ such that $\sqrt{d}\equiv2\pmod*{\p}$.

If $r$ is even then $r\equiv2\pmod*{4}$, and from \eqref{eqn:sddef} we
see that $4\mid s$. If $q$ is also even then $d\equiv4\pmod*{16}$, so that
$s^2-dp_2^2\equiv12\pmod*{16}$, in contradiction to \eqref{eqn:norm}. Hence,
$q$ must be odd and $d\equiv8\pmod*{16}$. It follows that the conductor of
$\OO$ is odd and $2$ is ramified in $\Q(\sqrt{d})$, so there is anyway a
unique prime ideal $\p\subseteq\OO$ lying above $2$.

In summary, provided
that $4\nmid r$, we have shown that $r$ is co-prime to the conductor of
$\OO$ and that the prime factors of $\a$ are uniquely determined.
Therefore, any solution of \eqref{eqn:sddef} and \eqref{eqn:norm}
generates the same ideal as
the solution noted above, viz.\ $\left(\frac{r-2+\sqrt{d}}2\right)\OO$.
Hence, \eqref{eqn:normrfamily} describes all solutions.

Next, to handle the case when $4\mid r$ we need to modify the above
argument since the conductor of $\OO$ is even. In this case we set
$$
d'=d/4,\quad
r'=r/4,\quad
s'=s/2\quad\mbox{and}\quad
\OO'=\Z\!\bigl[\tfrac{1+\sqrt{d'}}2\bigr],
$$
and we work over $\OO'$ instead of $\OO$.
Then
$$
d'=4(qr')^2+1\quad\mbox{and}\quad(s')^2-d'p_2^2=-4r',
$$
and if $\a'=\left(\frac{s'+p_2\sqrt{d'}}2\right)\OO'$ then
$N(\a')=|r'|$ and $\frac{1+\sqrt{d'}}2\equiv1\pmod*{\a'}$.
Note that if $r'$ is even then $d'\equiv1\pmod*{8}$, so that
$\left(\frac{d'}2\right)=1$.  Hence,
proceeding as above, for each prime $p\mid r'$,
we find that there is a unique prime $\p\subseteq\OO'$ such that
$N(\p)=p$ and $\frac{1+\sqrt{d'}}2\equiv1\pmod*{\p}$.
Thus, the ideal $\a'$ is again uniquely determined, so
\eqref{eqn:normrfamily} describes all solutions.

It remains only to show that \eqref{eqn:norm} admits no solutions if
$\min(|q|,|r|)>1$. For this we appeal to the reduction theory of
primitive ideals in quadratic orders; see, for instance,
\cite[Chapters 8 and 9]{bv} for terminology and fundamental results.
When $4\nmid r$, we apply the reduction algorithm
to see that the cycle of $\OO$ has length $1$; in other words,
$\OO$ is the only reduced principal $\OO$-ideal.
On the other hand, by \cite[Prop.~9.1.8]{bv},
any primitive $\OO$-ideal of norm less than $\sqrt{d}/2$ is reduced.
Note that if $|q|\ge2$ then
$$
|r|\le\frac{|qr|}2<\frac{\sqrt{(qr)^2+4}}2=\frac{\sqrt{d}}2.
$$
Together these imply that
if $|q|,|r|\ne1$ then there is no primitive,
principal $\OO$-ideal of norm $|r|$, so
\eqref{eqn:norm} is not solvable.

For $r$ divisible by $4$, we similarly apply the reduction algorithm to
$\OO'$ and find that its cycle consists of $\OO'$ together with the ideals
$\left(\frac{qr'-2\pm\sqrt{d'}}2\right)\OO'$ of norm $|qr'|$. In this case
we have $|r'|<\frac12\sqrt{d'}$ for every value of $q$, so there are no
primitive, principal $\OO'$-ideals of norm $|r'|$ if $|q|,|r'|\ne1$.
\end{proof}

\subsubsection{Prime triples}
We now return to the prime case. 
Clearly the third and fourth lines of \eqref{eqn:parametric} never yield
primes, and since the sum of the entries of the second line is
even, the only (positive) prime solutions that it yields are
permutations of $(2,3,5)$.
As for the first line, note that $F_n(x)$ is irreducible only if $|n|$
is prime \cite{levy}. If we take $p_1<p_3$, then we may assume that
$n$ is an odd prime, $x$ is positive, and $\delta=1$.

In particular, with $n=3$ we get the solutions
$$(p_1,p_2,p_3)=(x^2+x+1,x^2+1,x^3+x^2+2x+1).$$
By standard conjectures (Schinzel's
Hypothesis), we expect that these polynomials are simultaneously prime
for infinitely many values of $x>0$, and that motivates the following
conjecture.
\begin{conjecture}
There are infinitely many $P\in\PP{3}$ with $m(P)>1$.
\end{conjecture}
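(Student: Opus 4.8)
The plan is to derive the conjecture from a single instance of Schinzel's Hypothesis~H, exploiting the parametrization already established. By Proposition~\ref{prop:triple}, a triple $P=(p_1,p_2,p_3)\in\PP{3}$ is multiple exactly when it satisfies the congruences~\eqref{eqn:triple}; and by Theorem~\ref{thm:fibonacci}, every integer solution of~\eqref{eqn:triple} is given by~\eqref{eqn:parametric}. Taking $n=3$ in the first line yields the one-parameter family
\[
(p_1,p_2,p_3)=\bigl(x^2+x+1,\;x^2+1,\;x^3+x^2+2x+1\bigr),\qquad x\in\Z,
\]
every member of which solves~\eqref{eqn:triple} identically in $x$ (indeed $p_2(p_1+p_3)=1+p_1p_3$ and $p_3-p_1=xp_2$ as polynomial identities). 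Thus, whenever the three coordinates are simultaneously prime, the resulting triple lies in $\PP{3}$ and has $m(P)>1$. Since for $x\ge1$ the three values are distinct---one computes $p_1-p_2=x$, $p_3-p_1=x(x^2+1)$ and $p_3-p_2=x^3+2x$, all positive---and distinct values of $x$ give distinct triples, it suffices to produce infinitely many $x\ge1$ at which all three polynomials are prime.

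To apply Hypothesis~H to the product $f(x)=(x^2+x+1)(x^2+1)(x^3+x^2+2x+1)$, I would check its three requirements. Each factor has positive leading coefficient and is irreducible over $\Q$: the two quadratics have discriminants $-3$ and $-4$, and the cubic has no rational root. The only subtle condition---that no prime divides $f(x)$ for all integers $x$---holds trivially here because $f(0)=1$. Granting Hypothesis~H, the three polynomials are therefore simultaneously prime for infinitely many $x$, and by the preceding paragraph this produces infinitely many multiple triples, establishing the conjecture conditionally. The quantitative Bateman--Horn heuristic moreover predicts a positive asymptotic density of such $x$, consistent with the numerical evidence.

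The decisive obstacle is that the argument is only conditional, and I see no route to an unconditional proof. Guaranteeing infinitely many prime values of even a single polynomial of degree at least $2$ lies beyond current technology---the infinitude of primes of the form $x^2+1$ is already one of Landau's problems and remains open---so demanding simultaneous primality of two quadratics and a cubic is strictly harder. The available sieve methods (for instance Iwaniec's result that $x^2+1$ is infinitely often a product of at most two primes, or the recent bounded-gaps machinery) deliver almost-primes or primes in configurations, not the genuine simultaneous primes required here, and the other branches of~\eqref{eqn:parametric} offer no relief: the third and fourth lines never yield primes, the second yields only permutations of $(2,3,5)$, and the remaining specializations of the first line (with $n$ an odd prime $>3$) involve polynomials of even higher degree. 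Absent a breakthrough on prime values of polynomials, Hypothesis~H remains the natural justification, which is precisely why the statement is offered as a conjecture rather than a theorem.
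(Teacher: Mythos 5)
Your conditional argument is precisely the paper's own motivation: the authors also specialize the first line of \eqref{eqn:parametric} at $n=3$ to get $(x^2+x+1,\,x^2+1,\,x^3+x^2+2x+1)$, invoke Schinzel's Hypothesis for simultaneous primality, and for exactly the reasons you give leave the statement as a conjecture rather than a theorem (proving only the weaker unconditional results on $k$-tuples with $k\le 13$ in Section~\ref{sec:largek}). Your verification of the hypotheses of Hypothesis~H and your assessment of the obstruction are both correct and consistent with the paper.
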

In fact, it is natural to expect triples of primes to occur with
probability proportional to $(\log{x})^{-3}$, so there should be a
constant $c>0$ such that
$$
\#\{P\in\PP{3}:m(P)>1\mbox{ and }|P|<X\}
=(c+o(1))\frac{X^{1/7}}{(\log{X})^3}
\quad\mbox{as }X\to\infty.
$$
Such a statement seems far from what can be proven with present
technology, but we are able to obtain somewhat weaker results in
Section~\ref{sec:largek} below.

\subsection{Multiple quadruples}
In this section we compute the systems of congruences giving rise to
multiple quadruples of edge primes, analogous to
Proposition~\ref{prop:triple} in the case of triples.
Note first that if
$(p_1,p_2,p_3)\sim(p_3,p_2,p_1)\in\PP{3}$ is a multiple triple, then
clearly $(p_0,p_1,p_2,p_3)\sim(p_0,p_3,p_2,p_1)$ and
$(p_1,p_2,p_3,p_4)\sim(p_3,p_2,p_1,p_4)$ are multiple quadruples
for any suitable choice of $p_0$ or $p_4$. More interesting are the
solutions giving rise to loops of height $4$ in the graph. More
generally, we will be interested in pairs
$P=(p_1,\ldots,p_k),Q=(q_1,\ldots,q_k)\in\PP{k}$ defining paths
in $G_n$ that meet only at $n$ and $|P|n=|Q|n$, so
that they form a loop of height $k$;
that is the content of the following definition.
\begin{definition}
Let $P=(p_1,\ldots,p_k),Q=(q_1,\ldots,q_k)\in\PP{k}$. We say that the
pair $(P,Q)\in\PP{k}^2$
is \emph{irreducible} if $P\ne Q$, $P\sim Q$ and
$$
p_1\cdots p_i\ne q_1\cdots q_i
\quad\mbox{for }0<i<k.
$$
\end{definition}
\begin{remark}
Note that $(P,Q)$ is irreducible if and only if $(Q,P)$ is irreducible,
so we may regard the pair as unordered.
\end{remark}

Next, we observe that any equivalence $P\sim Q$ gives rise to another
equivalence, as follows.
\begin{lemma}\label{lem:reverse}
Let $P\in\PP{k}$, and suppose that $P$ is equivalent to $Q=\pi.P$ for
some $\pi\in S_k$. Let
$\sigma=\begin{pmatrix}1&k\end{pmatrix}\begin{pmatrix}2&k-1\end{pmatrix}
\cdots\begin{pmatrix}\lfloor\frac{k}2\rfloor&
k+1-\lfloor\frac{k}2\rfloor\end{pmatrix}\in S_k$
be the permutation that reverses the order of indices, and put
$\widetilde{P}=\sigma.P$, $\widetilde{Q}=\sigma.Q$.
Then:
\begin{enumerate}
\item
$\widetilde{P}$ is equivalent to
$\widetilde{Q}=\sigma\pi\sigma.\widetilde{P}$;
\item
$P$, $Q$, $\widetilde{P}$ and $\widetilde{Q}$ all have the
same multiplicity.
\end{enumerate}
\end{lemma}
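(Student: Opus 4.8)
The plan is to route everything through the arithmetic-progression description of paths in Lemma~\ref{lem:NP}, which converts the combinatorial equivalence $\sim$ into the numerical identity $N(\cdot)=N(\cdot)$ and makes the reversal symmetry transparent. I would first dispose of the purely formal half of part~(1): since the $S_k$-action is a left action and $\sigma^2=\mathrm{id}$, we have $\sigma\pi\sigma.\widetilde{P}=\sigma\pi\sigma.(\sigma.P)=(\sigma\pi).P=\sigma.(\pi.P)=\widetilde{Q}$, so the identity $\widetilde{Q}=\sigma\pi\sigma.\widetilde{P}$ holds as tuples with no arithmetic input. The genuine content is that $\widetilde{P}$ and $\widetilde{Q}$ are \emph{equivalent}, and by Lemma~\ref{lem:NP}(2) it suffices to prove $N(\widetilde{P})=N(\widetilde{Q})$; the witnessing permutation is then forced to be $\sigma\pi\sigma$, since the distinct entries of $\widetilde{P}$ determine it uniquely.

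The key step is to express $N(\widetilde{P})$ in terms of $N(P)$ in a way that does not see the ordering of the primes. Writing $\widetilde{P}=(p_k,\ldots,p_1)$ and unwinding its defining congruences from Lemma~\ref{lem:NP}(1), an integer $m$ lies in $N(\widetilde{P})$ exactly when $p_{i+1}\cdots p_k\,m\equiv-1\pmod{p_i}$ for each $i$, whereas $a=a(P)$ satisfies $p_1\cdots p_{i-1}\,a\equiv-1\pmod{p_i}$. Multiplying the two congruences at index $i$ gives $(|P|/p_i)\,a\,a(\widetilde{P})\equiv1\pmod{p_i}$, so that $a(\widetilde{P})$ is characterized by the conditions $(|P|/p)\,a\,a(\widetilde{P})\equiv1\pmod{p}$ ranging over all primes $p\mid|P|$. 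Because $|P|$ is squarefree and $a$ is a unit modulo $|P|$, these conditions pin down $a(\widetilde{P})$ uniquely modulo $|P|$, and — this is the crucial point — they refer only to $a(P)$, to $|P|$, and to the set of prime divisors of $|P|$, never to the order of the $p_i$. Hence $N(\widetilde{P})$ is a function of $N(P)$ alone. Feeding the hypothesis $P\sim Q$ through Lemma~\ref{lem:NP} yields $|P|=|Q|$ and $a(P)\equiv a(Q)\pmod{|P|}$, so $a(\widetilde{P})\equiv a(\widetilde{Q})$ and $N(\widetilde{P})=N(\widetilde{Q})$, completing part~(1).

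For part~(2) I would argue at the level of the sets counting multiplicity. By part~(1), whenever $P\sim\pi.P$ we get $\widetilde{P}\sim(\sigma\pi\sigma).\widetilde{P}$, so the conjugation map $\pi\mapsto\sigma\pi\sigma$ carries $\{\pi:P\sim\pi.P\}$ into $\{\tau:\widetilde{P}\sim\tau.\widetilde{P}\}$; it is a bijection of $S_k$, and applying part~(1) again with $\widetilde{P}$ in place of $P$ (using $\widetilde{\widetilde{P}}=P$ and $\sigma^2=\mathrm{id}$) shows it restricts to a bijection between these two sets, giving $m(P)=m(\widetilde{P})$. The remaining equalities hold because multiplicity is constant on equivalence classes: if $P\sim Q$ with $Q=\pi.P$, then by transitivity $Q\sim\tau.Q\iff P\sim(\tau\pi).P$, so $\tau\mapsto\tau\pi$ is a bijection between $\{\tau:Q\sim\tau.Q\}$ and $\{\rho:P\sim\rho.P\}$, whence $m(P)=m(Q)$; the same applied to the pair $\widetilde{P}\sim\widetilde{Q}$ from part~(1) gives $m(\widetilde{P})=m(\widetilde{Q})$. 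Chaining these yields $m(P)=m(Q)=m(\widetilde{P})=m(\widetilde{Q})$.

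I expect the real obstacle to be the second paragraph: verifying that $N(\widetilde{P})$ is governed by a rule symmetric in the $p_i$. The temptation is to perform the reversal congruence-by-congruence and end with an expression still visibly tied to the ordering; the thing to get right is that multiplying the forward and reversed congruences at each prime $p$ collapses the ordering into the single symmetric condition $(|P|/p)\,a(P)\,a(\widetilde{P})\equiv1\pmod{p}$. Once that is in place the transfer $P\sim Q\Rightarrow\widetilde{P}\sim\widetilde{Q}$ is immediate from Lemma~\ref{lem:NP}, and part~(2) is bookkeeping with bijections of $S_k$.
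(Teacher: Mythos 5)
Your proof is correct, but it takes a genuinely different route from the paper's. The paper works directly with the congruences of Definition~\ref{def:equiv}: from $p_1\cdots p_{i-1}\equiv q_1\cdots q_{j-1}\pmod*{p_i}$ and the exact identity $(p_1\cdots p_{i-1})(p_{i+1}\cdots p_k)=(q_1\cdots q_{j-1})(q_{j+1}\cdots q_k)$ (both sides being $|P|/p_i$ after cancelling $p_i=q_j$), it divides to obtain $p_{i+1}\cdots p_k\equiv q_{j+1}\cdots q_k\pmod*{p_i}$, which is precisely the system asserting $\widetilde{P}\sim\widetilde{Q}$; part~(2) is then dispatched by observing that equivalent tuples have equal multiplicity and that $\sigma$ induces a bijection of equivalence classes. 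You instead route everything through $N(\cdot)$ and Lemma~\ref{lem:NP}(2), showing that $a(\widetilde{P})$ is pinned down by the permutation-symmetric conditions $(|P|/p)\,a(P)\,a(\widetilde{P})\equiv1\pmod*{p}$ for $p\mid|P|$, so that $N(\widetilde{P})$ is a function of $N(P)$ alone. The underlying cancellation is the same in both arguments --- the complement of $p_i$ in the product is $|P|/p_i$, which does not see the ordering --- but your packaging buys a cleaner conceptual statement (reversal acts on the arithmetic progressions rather than on orderings) at the cost of invoking the full strength of the equivalence $P\sim Q\iff N(P)=N(Q)$, whereas the paper's version is self-contained at the level of the defining congruences and slightly shorter. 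Your part~(2), which spells out the bijections $\pi\mapsto\sigma\pi\sigma$ and $\tau\mapsto\tau\pi$, is an explicit rendering of what the paper treats as clear; both are sound.
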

\begin{proof}
Suppose that $P=(p_1,\ldots,p_k)$ and $Q=(q_1,\ldots,q_k)$. Then
$$
p_1\cdots p_{i-1}\equiv q_1\cdots q_{j-1}\pmod*{p_i}
$$
whenever $p_i=q_j$. Note that we also have $|P|=|Q|$,
and cancelling the common factor of $p_i=q_j$ yields
$$
(p_1\cdots p_{i-1})(p_{i+1}\cdots p_k)=(q_1\cdots q_{j-1})(q_{j+1}\cdots q_k).
$$
Dividing this equality by the above congruence, we obtain
$$
p_{i+1}\cdots p_k\equiv q_{j+1}\cdots q_k\pmod*{p_i}.
$$
Thus, $(p_k,\ldots,p_1)$ is equivalent to $(q_k,\ldots,q_1)$, as desired.

For the second assertion, $P$ and $Q$ clearly have the same multiplicity
since they are equivalent, and likewise for $\widetilde{P}$ and
$\widetilde{Q}$, so it is enough to show that $m(P)=m(\widetilde{P})$.
But by the first assertion, $P$ is equivalent to $Q$ if and only
if $\widetilde{P}=\sigma.P$ is equivalent to $\widetilde{Q}=\sigma.Q$, so 
$\sigma$ defines a bijection between the equivalence classes of $P$ and
$\widetilde{P}$.
\end{proof}

\begin{proposition}\label{prop:quadruple}
Let $(p_1,p_2,p_3,p_4)\in\PP{4}$.  If the conditions listed in the middle
column of the following table are satisfied in any one case, then each
of the corresponding quadruples in the right column has multiplicity
$2$, with equivalence classes as indicated.  Conversely,
every multiple quadruple has multiplicity $2$,
and every irreducible pair of multiple quadruples
occurs in the table for a unique choice of $(p_1,p_2,p_3,p_4)$.
\begin{center}
\begin{tabular}{|c|c|c|}\hline
& $p_4\equiv1\pmod*{p_1}$ &
$\{(p_1,p_2,p_3,p_4),(p_4,p_1,p_3,p_2)\}$\\
{\rm Case I} & $p_3(p_1p_2+p_4)\equiv1\pmod*{p_2p_4}$ &
$\{(p_4,p_3,p_2,p_1),(p_2,p_3,p_1,p_4)\}$\\
& $p_2\equiv p_4\pmod*{p_3}$ & \\ \hline
& $p_1<p_2$ &
$\{(p_1,p_2,p_3,p_4),(p_4,p_3,p_1,p_2)\}$\\
{\rm Case II} & $p_3(p_1p_2+p_4)\equiv1\pmod*{p_1p_2p_4}$ &
$\{(p_4,p_3,p_2,p_1),(p_2,p_1,p_3,p_4)\}$\\
& $p_1p_2\equiv p_4\pmod*{p_3}$ & \\ \hline
& $p_1<p_4,\quad p_2<p_3$ & $\{(p_1,p_2,p_3,p_4),(p_4,p_2,p_3,p_1)\}$\\
{\rm Case III} & $(p_1+p_4)p_2p_3\equiv1\pmod*{p_1p_4}$ &
$\{(p_4,p_3,p_2,p_1),(p_1,p_3,p_2,p_4)\}$\\
& $p_1\equiv p_4\pmod*{p_2p_3}$ & \\ \hline
& $p_1<p_4$ &
$\{(p_1,p_2,p_3,p_4),(p_4,p_3,p_2,p_1)\}$\\
{\rm Case IV} & $(p_1+p_4)p_2p_3\equiv1\pmod*{p_1p_4}$ &\\
& $p_1\equiv p_3p_4\pmod*{p_2}$ &\\
& $p_1p_2\equiv p_4\pmod*{p_3}$ &\\ \hline
\end{tabular}
\end{center}
\end{proposition}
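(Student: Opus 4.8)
The plan is to mimic the analysis of Proposition~\ref{prop:triple}, but now over the permutations of $S_4$, cutting the case count down with the symmetries already at hand. First I would record the book-keeping that makes $S_4$ manageable. Given a non-trivial $\pi\in S_4$ with $P\sim\pi.P=Q$, the two induced paths meet at an intermediate node exactly when a partial product $p_1\cdots p_i$ equals $q_1\cdots q_i$ for some $0<i<4$; since there are no multiple pairs, such a coincidence at $i=2$ would force the restricted equivalence $(p_1,p_2)\sim(p_2,p_1)$, which is impossible, while coincidences at $i=1,3$ amount to $\pi(1)=1$ and $\pi(4)=4$ respectively. Hence irreducibility is equivalent to $\pi(1)\ne1$ and $\pi(4)\ne4$, leaving exactly $14$ candidate permutations; the remaining end-fixing permutations reduce a multiple quadruple to a multiple triple and are governed by Proposition~\ref{prop:triple}. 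Two further symmetries collapse these $14$: replacing $\pi$ by $\pi^{-1}$ swaps the unordered pair $(P,Q)$, and conjugating by the reversal $\sigma$ of Lemma~\ref{lem:reverse} sends the equivalence to the reversed one. Under these, the $14$ candidates fall into eight orbits, represented by the transposition $(14)$, the three double transpositions $(14)(23)$, $(12)(34)$, $(13)(24)$, the single $3$-cycle orbit $(124)$, and the three $4$-cycle orbits $(1234)$, $(1324)$, $(1243)$.

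Next I would translate Definition~\ref{def:equiv} into an explicit congruence system for each representative and eliminate the ones that cannot occur. The double transposition $(12)(34)$ again forces the multiple pair $(p_1,p_2)\sim(p_2,p_1)$; for $(13)(24)$ the system collapses to $p_3p_4\equiv1\pmod{p_1p_2}$ and $p_1p_2\equiv1\pmod{p_3p_4}$, impossible by size; and $(1234)$ yields $p_4\equiv1\pmod{p_1p_2p_3}$ together with $p_1p_2p_3\equiv1\pmod{p_4}$, impossible for the same reason. The genuinely delicate elimination is the $(1243)$ orbit, with system $p_3\equiv1\pmod{p_1}$, $p_3p_4\equiv1\pmod{p_2}$, $p_1p_2\equiv1\pmod{p_3}$, $p_2\equiv1\pmod{p_4}$. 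Here I would combine the middle two congruences by the Chinese remainder theorem to get $p_1p_2+p_3p_4\equiv1\pmod{p_2p_3}$; since the outer two force $p_3>p_1$ and $p_2>p_4$, the left-hand side lies strictly between $p_2p_3$ and $2p_2p_3$, so in fact $p_1p_2+p_3p_4=p_2p_3+1$. Writing $p_3=1+cp_1$ and $p_2=1+dp_4$ and substituting reduces this to $(1-(c-1)(d-1))p_1p_4-(c-1)p_1-(d-1)p_4=2$, which has no solution in primes $p_1,p_4\ge2$ with $c,d\ge1$, by a short check on $(c-1,d-1)$. The four surviving representatives $(124)$, $(1324)$, $(14)$, $(14)(23)$ produce, after recombining congruences by the Chinese remainder theorem, precisely the systems of Cases I--IV; since every step is reversible, this simultaneously proves the forward direction, that the tabulated conditions force multiplicity $2$ with the stated equivalence classes.

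It remains to see that the multiplicity is always exactly $2$ and that the labelling is unique. For this I would show that the congruence systems of Cases I--IV, together with the two triple-induced systems arising from the end-fixing permutations, are pairwise incompatible for a single $(p_1,p_2,p_3,p_4)$, so that every equivalence class has size at most $2$; the key subcase is ruling out two overlapping multiple triples $(p_1,p_2,p_3)$ and $(p_2,p_3,p_4)$ sharing the primes $p_2,p_3$, which I would dispatch by feeding the two instances of \eqref{eqn:triple} against each other. The ordering conventions $p_1<p_2$, $p_1<p_4$, $p_2<p_3$ in the table, together with the built-in asymmetry of the congruences (for instance $p_4\equiv1\pmod{p_1}$ in Case~I), then single out a unique representative from each inversion/reversal orbit, so that every irreducible pair occurs for exactly one $(p_1,p_2,p_3,p_4)$ and in exactly one case, the two classes displayed in Cases~I--III being the reversal-partners supplied by Lemma~\ref{lem:reverse}.

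The main obstacle is twofold. The elimination of the $(1243)$ orbit is the most intricate of the impossibility arguments: unlike the other excluded permutations, its system is not ruled out by a one-line size comparison and genuinely requires the master identity $p_1p_2+p_3p_4=p_2p_3+1$ and the ensuing Diophantine check. Beyond that, the real labour is the incompatibility book-keeping behind ``multiplicity exactly~$2$'': verifying that no two of Cases~I--IV, and no case together with a triple-induced equivalence, can hold for a common quadruple. This is where the bulk of the routine-but-lengthy congruence manipulation lives, and care is needed to organize it so that the symmetry reductions of the first step are applied consistently and no orbit is double-counted.
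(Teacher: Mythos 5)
Your overall strategy for the classification coincides with the paper's: reduce the $14$ permutations moving both endpoints to orbits under inversion and conjugation by the reversal $\sigma$ of Lemma~\ref{lem:reverse}, write out the congruence system for one representative per orbit, kill the bad orbits, and read off Cases I--IV from the survivors. (The paper folds $(12)(34)$ into the irreducibility reduction rather than listing it as an eighth orbit, but that is cosmetic.) Your elimination of the $(1243)$ orbit is a genuinely different and perfectly valid route: you derive $p_1p_2+p_3p_4=p_2p_3+1$ from the CRT-combined congruence and finish with a short Diophantine check on $(c-1,d-1)$, whereas the paper normalizes $p_2<p_3$, bounds $p_2\le 2p_1-3$ and $p_3<2p_1$, and derives a contradiction from $p_1=2$, $p_3=3$. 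Your version is arguably cleaner; just note that your stated bracketing ``strictly between $p_2p_3$ and $2p_2p_3$'' is not justified on the lower end (only $p_1p_2+p_3p_4<2p_2p_3$ follows from $p_1<p_3$, $p_4<p_2$), though the conclusion survives since the sum plainly exceeds $1$.

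The genuine gap is in the multiplicity-$2$ argument. Framing it as ``the six systems are pairwise incompatible for a single $(p_1,p_2,p_3,p_4)$'' misses the possibility that $P$ is equivalent to $\pi.P$ and $\pi'.P$ for two \emph{distinct permutations in the same orbit} -- e.g.\ $\pi=(124)$ and $\pi'=(142)$, both ``Case I'' -- which is not an incompatibility between two different tabulated systems but between one system and its pullback along a relabelling of $P$. The paper handles this by observing that $\pi$, $\pi'$ \emph{and} $\pi'\pi^{-1}$ must all lie in the admissible set, then using the symmetry group to reduce to exactly three pairs: $((124),(142))$, $((13),(124))$ and $((13),(134))$. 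Moreover, the subcase you single out as key -- two overlapping multiple triples on $(p_1,p_2,p_3)$ and $(p_2,p_3,p_4)$ -- is actually free: it forces $\pi'\pi^{-1}=(13)(24)$, already excluded. The genuinely hard pairs are the other two, and they are not ``routine-but-lengthy congruence manipulation'': $((13),(124))$ requires writing $p_4=\frac{p_1p_2p_3-1}{t}$, pinning down $t=p_1p_2p_3-p_1-p_2p_3$ and running a size argument, and $((13),(134))$ leads to $p_4=p_3+p_1p_2$ and needs the full classification of Theorem~\ref{thm:fibonacci} to force $(p_1,p_2,p_3)$ to be a permutation of $(2,3,5)$ before reaching a contradiction. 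As written, your plan neither isolates these cases nor supplies the ideas needed to close them.
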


\begin{remarks}\hspace{1mm}
\begin{enumerate}
\item
Note that the non-trivial permutations of $P=(p_1,p_2,p_3,p_4)$ appearing
in the table are those labelled $Q$, $\widetilde{P}$ and $\widetilde{Q}$
in Lemma~\ref{lem:reverse}; they are all distinct except in Case IV,
where we have $Q=\widetilde{P}$ and $\widetilde{Q}=P$.
\item
The proposition asserts that a given quadruple cannot appear on the
right-hand side of the table more than once,
and that there are never more than two paths
in $G_n$ between $n$ and $p_1p_2p_3p_4n$.
However, it can happen that different permutations
of $(p_1,p_2,p_3,p_4)$ arise from different cases in the table or
from the same case multiple times; for instance, eight
permutations of $(2,3,11,13)$ give rise to quadruples with multiplicity
$2$, and they arise once in Case I and twice in Case IV.
This is not a contradiction because the sets $N(P)$ and $N(P')$ are
disjoint for inequivalent permutations $P$ and $P'$, and thus the
corresponding paths cannot emerge together from the same node.
\item
We will see below that solutions exist in each of the Cases I--IV.
\end{enumerate}
\end{remarks}

\begin{proof}
Let $P=(p_1,p_2,p_3,p_4)$, $Q=(q_1,q_2,q_3,q_4)$, and suppose that
$(P,Q)\in\PP{4}^2$ form an irreducible pair.  Then there is a non-trivial
permutation $\pi\in S_4$ such that $P$ is equivalent to $Q=\pi.P$. Since
$(P,Q)$ is irreducible, $\pi$ cannot stabilize any of the sets $\{1\}$,
$\{1,2\}$ or $\{1,2,3\}$. Moreover, by Lemma~\ref{lem:reverse},
the solutions for a given $\pi$ are in one-to-one correspondence with
those for $\pi^{-1}$, $\sigma\pi\sigma$ and $\sigma\pi^{-1}\sigma$,
where $\sigma=(14)(23)$, so we may group those permutations together
into classes and consider the solutions for only one permutation from
each class.

With some straightforward computations in $S_4$, we find that there are seven
classes:
\begin{equation}\label{eqn:permclasses}
\begin{aligned}
\{(1234),(1432)\}&,\{(1243),(1342)\},\{(13)(24)\},\\
\{(124),(142),(134),(143)\}&,\{(1324),(1423)\},\{(14)\},\{(14)(23)\}.
\end{aligned}
\end{equation}
The first three turn out to yield no solutions, while
the last four correspond to the four cases in the table. We consider
each class in turn and take $\pi$ to be the first element listed
in each case.

\medskip
$\pi=(1234)$:
Then $(p_1,p_2,p_3,p_4)=(q_2,q_3,q_4,q_1)$, and we have
\begin{align*}
1&\equiv q_1=p_4\pmod*{p_1}\\
p_1&\equiv q_1q_2=p_1p_4\pmod*{p_2}
\Longrightarrow 1\equiv p_4\pmod*{p_2}\\
p_1p_2&\equiv q_1q_2q_3=p_1p_2p_4\pmod*{p_3}
\Longrightarrow 1\equiv p_4\pmod*{p_3}\\
p_1p_2p_3&\equiv 1\pmod*{p_4}.
\end{align*}
Thus, we have both $p_4\equiv1\pmod*{p_1p_2p_3}$ and
$p_1p_2p_3\equiv1\pmod*{p_4}$, which is impossible.

\medskip
$\pi=(1243)$:
Then $(p_1,p_2,p_3,p_4)=(q_2,q_4,q_1,q_3)$, and we have
\begin{align*}
1&\equiv q_1=p_3\pmod*{p_1}\\
p_1&\equiv q_1q_2q_3=p_1p_3p_4\pmod*{p_2}
\Longrightarrow 1\equiv p_3p_4\pmod*{p_2}\\
p_1p_2&\equiv 1\pmod*{p_3}\\
p_1p_2p_3&\equiv q_1q_2=p_1p_3\pmod*{p_4}
\Longrightarrow p_2\equiv 1\pmod*{p_4}.
\end{align*}
Thus, $p_1$ divides $1-p_3$, and
$p_2\equiv\frac{1-p_3}{p_1}\pmod*{p_3}$.
Note that applying the permutation $(14)(23)$ to the indices leaves the
system unchanged, so we may assume without loss of generality that
$p_2<p_3$. Therefore, $p_2=p_3+\frac{1-p_3}{p_1}$, whence
$$
p_3\equiv\frac{p_3-1}{p_1}\pmod*{p_2}
\Longrightarrow p_1p_3\equiv p_3-1\pmod*{p_2}.
$$
Since we also have $p_3p_4\equiv1\pmod*{p_2}$, this implies that
$p_1+p_4\equiv1\pmod*{p_2}$.

Now, if $p_2<5$ then we must have $p_2=3$, $p_4=2$, so $p_1>3$
and $p_2<2p_1-3$. On the other hand, if $p_2\ge5$ then $p_2\ge1+2p_4$,
and
$$
1+p_2\le p_1+p_4\le p_1+\frac{p_2-1}2
\Longrightarrow p_2\le 2p_1-3.
$$
Since $p_2=p_3+\frac{1-p_3}{p_1}$, this implies that
$p_3<\frac{2(p_1-\frac32)p_1}{p_1-1}<2p_1$.
Hence, $p_3=p_1+1$, so that $p_1=2$, $p_3=3$. But then $p_2\le 2p_1-3=1$,
which is impossible.

\medskip
$\pi=(13)(24)$:
Then $(p_1,p_2,p_3,p_4)=(q_3,q_4,q_1,q_2)$ and we have
\begin{align*}
1&\equiv q_1q_2=p_3p_4\pmod*{p_1}\\
p_1&\equiv q_1q_2q_3=p_1p_3p_4\pmod*{p_2}
\Longrightarrow 1\equiv p_3p_4\pmod*{p_2}\\
p_1p_2&\equiv1\pmod*{p_3}\\
p_1p_2p_3&\equiv q_1=p_3\pmod*{p_4}
\Longrightarrow p_1p_2\equiv1\pmod*{p_4}.
\end{align*}
Thus, we have both $p_3p_4\equiv1\pmod*{p_1p_2}$ and
$p_1p_2\equiv1\pmod*{p_3p_4}$, which is impossible.

\medskip
$\pi=(124)$:
Then $(p_1,p_2,p_3,p_4)=(q_2,q_4,q_3,q_1)$, and we have
\begin{align*}
1&\equiv q_1=p_4\pmod*{p_1}\\
p_1&\equiv q_1q_2q_3=p_1p_3p_4\pmod*{p_2}
\Longrightarrow 1\equiv p_3p_4\pmod*{p_2}\\
p_1p_2&\equiv q_1q_2=p_1p_4\pmod*{p_3}
\Longrightarrow p_2\equiv p_4\pmod*{p_3}\\
p_1p_2p_3&\equiv 1\pmod*{p_4},
\end{align*}
which is equivalent to the set of conditions in Case I.
The equivalence classes in the right-hand column are
$\{P,\pi.P\},\{\sigma.P,\sigma\pi.P\}$.

\medskip
$\pi=(1324)$:
Then $(p_1,p_2,p_3,p_4)=(q_3,q_4,q_2,q_1)$, and we have
\begin{align*}
1&\equiv q_1q_2=p_3p_4\pmod*{p_1}\\
p_1&\equiv q_1q_2q_3=p_1p_3p_4\pmod*{p_2}
\Longrightarrow 1\equiv p_3p_4\pmod*{p_2}\\
p_1p_2&\equiv q_1=p_4\pmod*{p_3}\\
p_1p_2p_3&\equiv 1\pmod*{p_4},
\end{align*}
which is equivalent to the system of congruences in Case II.
In this case, the system is invariant under the action of
$(12)=\sigma\pi$, but 
the normalization condition $p_1<p_2$ ensures that each set of solutions
$\{P,\pi.P\},\{\sigma.P,\sigma\pi.P\}$ is counted only once.

\medskip
$\pi=(14)$:
Then $(p_1,p_2,p_3,p_4)=(q_4,q_2,q_3,q_1)$, and we have
\begin{align*}
1&\equiv q_1q_2q_3=p_2p_3p_4\pmod*{p_1}\\
p_1&\equiv q_1=p_4\pmod*{p_2}\\
p_1p_2&\equiv q_1q_2=p_2p_4\pmod*{p_3}
\Longrightarrow p_2\equiv p_4\pmod*{p_3}\\
p_1p_2p_3&\equiv 1\pmod*{p_4},
\end{align*}
which is equivalent to the system of congruences in Case III. In
this case, the system is invariant under both $\sigma$ and $\pi$,
but the normalization conditions
$p_1<p_4$ and $p_2<p_3$ ensure that each set of solutions
$\{P,\pi.P\},\{\sigma.P,\sigma\pi.P\}$ is counted only once.

\medskip
$\pi=(14)(23)$:
Then $(p_1,p_2,p_3,p_4)=(q_4,q_3,q_2,q_1)$, and we have
\begin{align*}
1&\equiv q_1q_2q_3=p_2p_3p_4\pmod*{p_1}\\
p_1&\equiv q_1q_2=p_3p_4\pmod*{p_2}\\
p_1p_2&\equiv q_1=p_4\pmod*{p_3}\\
p_1p_2p_3&\equiv 1\pmod*{p_4},
\end{align*}
which is equivalent to the system of congruences in Case IV. In
this case, we have $\pi=\sigma$, so we get only one equivalence class
of solutions.
The system is also invariant under $\pi=\sigma$, but the normalization
condition $p_1<p_4$ ensures that each set of solutions
$\{P,\pi.P\}$ is counted only once.

\medskip
Conversely, it is easy to see that the logic is reversible in the last
four cases considered, so
any $(p_1,p_2,p_3,p_4)\in\PP{4}$ satisfying one of the given
sets of conditions gives rise to multiple quadruples as indicated.

It remains to prove the assertion that the multiplicity is $2$ in each
case. Suppose that $P$ is equivalent to both $Q=\pi.P$ and $Q'=\pi'.P$
for some non-trivial $\pi\ne\pi'$.
Then $Q$ is equivalent to $Q'=\pi'\pi^{-1}.Q$.
Hence, $\pi$, $\pi'$ and $\pi'\pi^{-1}$ are all contained in the union
$$\{(124),(142),(134),(143),(1324),(1423),(14),(14)(23),(13),(24)\}$$
of the last four classes in \eqref{eqn:permclasses}, together with the
permutations giving rise to multiple triples $(p_1,p_2,p_3)$ or
$(p_2,p_3,p_4)$. Note that we are free to replace $P,Q,Q'$ by
$\sigma.P,\sigma.Q,\sigma.Q'$ or to permute them arbitrarily, which is
to say that we can replace $(\pi,\pi')$ by any of the pairs
$$
(\pi,\pi'),\;
(\pi',\pi),\;(\pi^{-1},\pi'\pi^{-1}),\;
(\pi'\pi^{-1},\pi^{-1}),\;
(\pi'^{-1},\pi\pi'^{-1})\;\mbox{ or }\;
(\pi\pi'^{-1},\pi'^{-1}),
$$
or their conjugates by $\sigma$.
Going through all possibilities, we find that we may assume that
$$
(\pi,\pi')\in\{((124),(142)),((13),(124)),((13),(134))\}.
$$
We consider these three cases in turn.

\medskip
$\pi=(124)$, $\pi'=(142)$:
Recall that $\pi=(124)$ leads to the system in Case I.
For $\pi'=(142)$ and $Q'=(q_1',q_2',q_3',q_4')$, we have
$(p_1,p_2,p_3,p_4)=(q_4',q_1',q_3',q_2')$, so that
$p_1\equiv1\pmod*{p_2}$ and $p_1p_2p_3\equiv q_1'=p_2\pmod*{p_4}$.
Hence, $p_2\equiv1\pmod*{p_4}$, and
we also have $p_4\equiv1\pmod*{p_1}$,
so that $p_4<p_2<p_1<p_4$, which is impossible.

\medskip
$\pi=(13)$, $\pi'=(124)$:
Then $(p_1,p_2,p_3,p_4)$ satisfies the system in Case I as well as
\eqref{eqn:triple}. Thus we have
\begin{align*}
1&\equiv p_4\equiv p_2p_3p_4\pmod*{p_1}\\
1&\equiv p_3p_4\equiv p_1p_4\pmod*{p_2}\\
1&\equiv p_1p_2\equiv p_1p_4\pmod*{p_3},
\end{align*}
so that $p_4(p_1+p_2p_3)\equiv1\pmod*{p_1p_2p_3}$. Also,
$p_1p_2p_3\equiv1\pmod*{p_4}$, so that $p_4=\frac{p_1p_2p_3-1}{t}$ for
some $t\in(0,p_1p_2p_3)\cap\Z$. Substituting for $p_4$, we have
$t\equiv-p_1-p_2p_3\pmod*{p_1p_2p_3}$, whence
$t=p_1p_2p_3-p_1-p_2p_3$. Thus,
$$
p_4(p_1p_2p_3-p_1-p_2p_3)=p_1p_2p_3-1,
$$
which implies
$$
p_4p_1-1=((p_4-1)p_1-p_4)p_2p_3\ge6[(p_4-1)p_1-p_4].
$$
Hence $p_1\le\frac{6p_4-1}{5p_4-6}$. If $p_4\ge3$ then this gives
$p_1<2$, while if $p_4=2$ then $2<p_1<3$, but both of these are
impossible.

\medskip
$\pi=(13)$, $\pi'=(134)$:
We have $Q'=(q_1',q_2',q_3',q_4')=(p_4,p_2,p_1,p_3)$, and in view of
\eqref{eqn:triple} we get
\begin{align*}
1&\equiv q_1'q_2'=p_2p_4\pmod*{p_1}\Longrightarrow
p_4\equiv p_2^{-1}\equiv p_3\pmod*{p_1}\\
p_1&\equiv q_1'=p_4\pmod*{p_2}\Longrightarrow
p_4\equiv p_3\pmod*{p_2}\\
p_1p_2&\equiv q_1'q_2'q_3'=p_1p_2p_4\pmod*{p_3}\Longrightarrow
p_4\equiv1\equiv p_1p_2\pmod*{p_3}\\
p_1p_2p_3&\equiv1\pmod*{p_4}.
\end{align*}
Hence $p_4\equiv p_3+p_1p_2\pmod*{p_1p_2p_3}$ and $p_4<p_1p_2p_3$, so
that $p_4=p_3+p_1p_2$. By parity considerations we see that at
least one of $p_1$, $p_2$ and $p_3$ must be $2$, and it follows from
Theorem~\ref{thm:fibonacci} that $(p_1,p_2,p_3)$ is a permutation of
$(2,3,5)$. Therefore, $p_1p_2p_3-1=29$ is prime, so that
$p_4=p_1p_2p_3-1>p_3+p_1p_2$, which is a contradiction.

\medskip
Finally, suppose that a quadruple $P$ occurs in the table for two
different choices of $(p_1,p_2,p_3,p_4)$.
Then, by the above argument, in both instances $P$ must be related to
the other element of its equivalence class by the same permutation.
Thus, either $P$ appears once in each equivalence class in Case II or Case
III, or twice in Case IV. However, the normalization conditions rule
out all of these possibilities.
\end{proof}

Table~\ref{tab:quadruple} shows the first several
solutions to the conditions in Proposition~\ref{prop:quadruple},
ordered by modulus.
\begin{table}[h!]
\begin{tabular}{r|l|l|l}
$(p_1,p_2,p_3,p_4)$ & $|P|$ & $a(P)$ & case\\ \hline
$(2,5,7,3)$ & $210$ & $107$, $149$ & II\\
$(3,13,2,7)$ & $546$ & $181$, $251$ & I\\
$(3,2,11,13)$ & $858$ & $467$, $779$ & I\\
$(11,3,2,13)$ & $858$ & $571$ & IV\\
$(13,3,2,11)$ & $858$ & $857$ & IV\\
$(3,19,11,2)$ & $1254$ & $1127$ & IV\\
$(7,3,2,41)$ & $1722$ & $1721$ & IV\\
$(41,3,2,7)$ & $1722$ & $1147$ & IV\\
$(41,7,2,3)$ & $1722$ & $491$ & IV\\
$(41,7,3,2)$ & $1722$ & $1639$ & IV\\
$(5,29,2,17)$ & $4930$ & $3909$ & IV\\
$(13,2,5,43)$ & $5590$ & $3353$, $5589$ & III\\
$(2,3,31,37)$ & $6882$ & $1183$, $5771$ & II\\
$(3,7,17,89)$ & $31773$ & $22427$, $26966$ & II\\
$(103,31,2,5)$ & $31930$ & $5149$ & IV\\
$(7,23,2,107)$ & $34454$ & $29959$ & IV\\
$(3,17,31,79)$ & $124899$ & $81764$, $81922$ & I\\
$(41,17,2,199)$ & $277406$ & $32635$ & IV\\
$(5,53,37,43)$ & $421615$ & $39559$, $173203$ & II\\
$(73,5,13,593)$ & $2813785$ & $1125513$, $1861426$ & III\\
$(449,67,2,191)$ & $11491706$ & $6517683$ & IV\\
$(241,2,113,3631)$ & $197766046$ & $183764909$, $42003407$ & III\\
$(2,3541,997,103)$ & $727257662$ & $714062125$ & IV\\
$(23,367,401,421)$ & $1425018061$ & $418499259$, $1226476565$ & II
\end{tabular}
\caption{\label{tab:quadruple}Multiple quadruples of small modulus}
\end{table}

\subsection{Multiple $k$-tuples for large $k$}\label{sec:largek}
The alert reader will note that the congruence constraints
in Cases II and III of
Proposition~\ref{prop:quadruple} are nothing but \eqref{eqn:triple}
with $(p_1,p_2,p_3)$ replaced by
$(p_1p_2,p_3,p_4)$ or $(p_1,p_2p_3,p_4)$; in particular, the solutions
are parametrized by Theorem~\ref{thm:fibonacci}.
This turns out to be a general phenomenon, in the sense that the system of
congruences arising from a given element of $S_k$ can be embedded in a
system for any $K>k$ by grouping the primes into products,
as the following lemma shows.
\begin{lemma}\label{lem:embed}
For $i=1,\ldots,k$, let $P_i>1$ be an integer with
prime factors $p_{ij}$ for $j=1,\ldots,r_i$, and assume that
$P_1\cdots P_k$ is squarefree. Put $K=r_1+\ldots+r_k$, and set
$$
P=(p_{11},\ldots,p_{1r_1},\ldots,p_{k1},\ldots,p_{kr_k})\in\PP{K}.
$$
Suppose that $\pi\in S_k$ is a non-trivial permutation such that
$$
P_1\cdots P_{i-1}\equiv Q_1\cdots Q_{\pi(i)-1}\pmod*{P_i}
\quad\mbox{for }i=1,\ldots,k,
$$
where $(Q_1,\ldots,Q_k)=\pi.(P_1,\ldots,P_k)$.
Then there is a non-trivial permutation $\Pi\in S_K$ such that
$P\sim\Pi.P$. Further, the pair $(P,\Pi.P)$ is irreducible if and only
if
$$
P_1\cdots P_i\ne Q_1\cdots Q_i
\quad\mbox{for }0<i<k.
$$
\end{lemma}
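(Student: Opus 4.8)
The plan is to build $\Pi$ directly from $\pi$ by permuting the blocks of primes while preserving their internal order. Concretely, writing $b_i=r_1+\cdots+r_{i-1}$ so that $p_{i\ell}$ occupies position $b_i+\ell$ in $P$, I would let $\Pi$ be the unique element of $S_K$ for which $\Pi.P$ is the concatenation of the blocks in the order $\pi^{-1}(1),\ldots,\pi^{-1}(k)$, each block keeping its internal order $p_{i1},\ldots,p_{ir_i}$. This is well defined because the primes are distinct, and $\Pi$ is nontrivial since $\pi$ moves some block and hence some prime. Writing $Q=\Pi.P=(q_1,\ldots,q_K)$, block $i$ lands in the $\pi(i)$-th position, so the entries of $Q$ preceding $p_{i\ell}$ are exactly the blocks $Q_1,\ldots,Q_{\pi(i)-1}$ together with the within-block prefix $p_{i1},\ldots,p_{i,\ell-1}$.

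To check $P\sim\Pi.P$ in the sense of Definition~\ref{def:equiv}, I would verify the congruence attached to each prime $p_{i\ell}$. On the left the product of the preceding entries of $P$ is $P_1\cdots P_{i-1}\cdot(p_{i1}\cdots p_{i,\ell-1})$, and on the right the product of the entries of $Q$ preceding the position of $p_{i\ell}$ is $Q_1\cdots Q_{\pi(i)-1}\cdot(p_{i1}\cdots p_{i,\ell-1})$. The prefix $p_{i1}\cdots p_{i,\ell-1}$ is common to both and coprime to $p_{i\ell}$, so after cancelling it the required congruence modulo $p_{i\ell}$ collapses to $P_1\cdots P_{i-1}\equiv Q_1\cdots Q_{\pi(i)-1}\pmod{P_i}$, which is exactly the hypothesis (recall $p_{i\ell}\mid P_i$). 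This disposes of the first assertion.

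For the second assertion, since $P\sim\Pi.P$ and $\Pi$ is nontrivial, irreducibility of $(P,\Pi.P)$ is equivalent to $p_1\cdots p_m\ne q_1\cdots q_m$ for all $0<m<K$, which I must match against the block condition $P_1\cdots P_i\ne Q_1\cdots Q_i$ for $0<i<k$. Because $P_1\cdots P_k$ is squarefree, a partial product of distinct primes determines the underlying set, so $p_1\cdots p_m=q_1\cdots q_m$ says precisely that the first $m$ entries of $P$ and of $Q$ form the same set $S$. The easy direction is that a block equality $P_1\cdots P_i=Q_1\cdots Q_i$ yields a prime equality: setting $m=r_1+\cdots+r_i$, the first $m$ entries of $P$ are blocks $1,\ldots,i$, while the equality forces the first $i$ blocks of $Q$ to have the same product and hence the same $m$ primes, so $p_1\cdots p_m=q_1\cdots q_m$ with $0<m<K$.

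The reverse direction is the main obstacle: I must show any prime-level coincidence forces a block-level one. Here I would analyze $S$ through its \emph{split} block, the at most one block of which $S$ contains a proper, nonempty subset. Reading $S$ off the $P$-ordering shows its fully covered blocks are exactly $\{1,\ldots,i-1\}$ and the only possible split block is block $i$; reading it off the $Q$-ordering shows the fully covered blocks are $\{\pi^{-1}(1),\ldots,\pi^{-1}(j-1)\}$ and the only possible split block is $\pi^{-1}(j)$. Comparing these two descriptions of the same set forces $j=i$, forces $\pi$ to stabilize $\{1,\ldots,i-1\}$, and, when a split block is present, forces $\pi(i)=i$. In either case $\pi$ permutes $\{1,\ldots,i-1\}$, so $P_1\cdots P_{i-1}=Q_1\cdots Q_{i-1}$, and when block $i$ is split we additionally get $Q_i=P_i$ and hence $P_1\cdots P_i=Q_1\cdots Q_i$. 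A short check that $0<m<K$ places one of these indices in $\{1,\ldots,k-1\}$ then yields the required block coincidence. The delicate point, and the step I expect to demand the most care, is the bookkeeping that a proper partial overlap of a block can occur only for a block that $\pi$ fixes; this is exactly what rules out spurious prime coincidences inside a moved block and makes the two irreducibility conditions agree.
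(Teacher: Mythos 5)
Your proposal is correct and follows essentially the same route as the paper: you build $\Pi$ by permuting the blocks according to $\pi$ while preserving each block's internal order, verify the equivalence by cancelling the common within-block prefix against the hypothesis modulo $P_i$, and settle irreducibility by comparing which full blocks divide a common partial product (your ``split block'' bookkeeping is just a repackaging of the paper's choice of the largest indices $i$, $i'$ with $I\ge s_{i+1}$, $I\ge t_{i'+1}$, which likewise forces $\pi$ to stabilize $\{1,\ldots,i\}$). No gaps.
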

\begin{remark}
Note that the order of the prime factors of $P_i$ is not specified, so each solution
$(P_1,\ldots,P_k)$ gives rise to $\prod_{i=1}^kr_i!$ multiple $K$-tuples.
\end{remark}
\begin{proof}
The main idea is to apply $\pi$ to the blocks of indices of length $r_i$.
More formally, for $i=1,\ldots,k+1$, let
$s_i=r_1+\ldots+r_{i-1}$ and
$t_i=r_{\pi^{-1}(1)}+\ldots+r_{\pi^{-1}(i-1)}$.
Note that $s_i+j$ is the index of the $j$th
prime factor of $P_i$ in $P$.
Given $I\in\{1,\ldots,K\}$ we define $\Pi(I)=t_{\pi(i)}+j$, where
$i\in\{1,\ldots,k\}$ and $j\in\{1,\ldots,r_i\}$ are the unique
indices for which $I=s_i+j$.

Note that $\Pi(I)=t_{\pi(i)}+j\le t_{\pi(i)}+r_i\le K$, so
$\Pi$ maps $\{1,\ldots,K\}$ to itself. To see that it defines
an element of $S_K$, it suffices to show that it is surjective. To
that end, given any $I\in\{1,\ldots,K\}$, choose $i$ to be the
largest positive integer such that $t_i<I$, and set $j=I-t_i>0$.
Then $t_i+r_{\pi^{-1}(i)}=t_{i+1}\ge I$, so $j\le r_{\pi^{-1}(i)}$. Hence
$I=\Pi(s_{\pi^{-1}(i)}+j)$, as required.

We must show that $P$ is equivalent to $\Pi.P$.
Let $u_1,\ldots,u_K$ denote the entries
of $P$ and $v_1,\ldots,v_K$ the entries of $\Pi.P$.
Given $I\in\{1,\ldots,K\}$, let
$I=s_i+j$ for $i\in\{1,\ldots,k\}$ and
$j\in\{1,\ldots,r_i\}$. Then
$$
u_1\cdots u_{I-1}
=\left(\prod_{i'=1}^{i-1}P_{i'}\right)\left(\prod_{j'=1}^{j-1}u_{s_i+j'}\right).
$$
Since $u_I\mid P_i$
and $u_{s_i+j'}=v_{t_{\pi(i)}+j'}$ for $j'=1,\ldots,j$,
this is congruent modulo $u_I=v_{\Pi(I)}$ to
$$
\left(\prod_{i'=1}^{\pi(i)-1}Q_{i'}\right)
\left(\prod_{j'=1}^{j-1}v_{t_{\pi(i)}+j'}\right)
=v_1\cdots v_{\Pi(I)-1}.
$$
Since $I$ was arbitrary, $P\sim\Pi.P$.

As for the final claim, if $(P,\Pi.P)$ is not irreducible then
$u_1\cdots u_I=v_1\ldots v_I$ for some $I\in(0,K)\cap\Z$.
If $I<r_1$ then by definition we have
$\Pi(I)=t_{\pi(1)}+I$. Since $u_I$ divides $v_1\cdots v_I$, we also have
$\Pi(I)\le I$. Thus $t_{\pi(1)}=0$, which implies $\pi(1)=1$ and
$P_1=Q_1$. Hence we may assume that $I\ge r_1$.

Let $i<k$ be the largest positive integer such that
$I\ge s_{i+1}$, and $i'<k$ the largest non-negative integer such that
$I\ge t_{i'+1}$.  It follows that
$u_1\cdots u_I$ is divisible by $P_1,\ldots,P_i$ but not by $P_j$ for
any $j>i$. Similarly, $v_1\cdots v_I$ is divisible by
$Q_1\ldots,Q_{i'}$, but not by $Q_j$ for any $j>i'$.
Since $P_j=Q_{\pi(j)}$ for every $j$, it follows that
$\pi$ is a bijection
between $\{1,\ldots,i\}$ and $\{1,\ldots,i'\}$;
hence $i'=i$
and $\pi$ stabilizes $\{1,\ldots,i\}$. In particular,
$P_1\cdots P_i=Q_1\cdots Q_i$.

Conversely, suppose that $P_1\cdots P_i=Q_1\cdots Q_i$ for some
$i\in(0,k)\cap\Z$. We have $P_1\cdots P_i=u_1\cdots u_I$
and $Q_1\cdots Q_i=v_1\cdots v_{I'}$ for some $I,I'\in(0,K)\cap\Z$.
By unique factorization, $I=I'$, and thus $(P,\Pi.P)$ is not
irreducible.
\end{proof}

In the following we let $T_r$ denote the set of squarefree integers
with at most $r$ prime factors, and $T_\infty=\bigcup_{r=0}^\infty T_r$
the set of all squarefree integers.
\begin{lemma}\label{lem:sqfree}
Let $f(x)=(x^2+x+1)(x^2+1)(x^3+x^2+2x+1)$ and
$g(x)=x(x^2-x+1)(x^2+1)$.
Then, for any
$q\in\Z_{>0}$ and all sufficiently large $X>0$ (with the meaning of
``sufficiently large'' possibly depending on $q$), we have
\begin{enumerate}
\item
$
\#\{x\in\Z\cap[1,X]:(f(x),q)=1\mbox{ and }f(x)\in T_\infty\}
\gg_q X;
$
\item
$
\#\{x\in\Z\cap[1,X]:(f(x),q)=1\mbox{ and }f(x)\in T_{13}\}
\gg_q\frac{X}{(\log{X})^3};
$
\item
$
\#\{x\in\Z\cap[1,X]:(g(x),q^2)=q\mbox{ and }q^{-1}g(x)\in T_\infty\}
\gg_q X;
$
\item
$
\#\{x\in\Z\cap[1,X]:(g(x),q^2)=q\mbox{ and }q^{-1}g(x)\in T_{12}\}
\gg_q\frac{X}{(\log{X})^3}.
$
\end{enumerate}
\end{lemma}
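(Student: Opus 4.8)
The plan is to treat all four parts by the same two-step strategy: first dispose of the arithmetic conditions attached to $q$ by restricting $x$ to a suitable residue class, and then apply sieve methods to the resulting sequence of polynomial values. For parts (1) and (2), note that $f$ is monic of degree $7$, hence reduces to a non-zero polynomial modulo every prime; so for each $p\mid q$ there is a residue of $x$ with $p\nmid f(x)$ (automatic for $p>7$, a finite check for $p\le7$), and by the Chinese remainder theorem a positive proportion of residues modulo $\mathrm{rad}(q)$ force $(f(x),q)=1$. For parts (3) and (4), the condition $(g(x),q^2)=q$ is equivalent to $v_p(g(x))=v_p(q)$ for every $p\mid q$; since $g$ has the factor $x$, one prescribes $v_p(x)=v_p(q)$ and $p\nmid g_2(x)g_3(x)$ to achieve this exactly, again on a positive proportion of residues. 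After fixing such a class I may forget $q$ and count within an arithmetic progression. I would also record that $f(x)$ (resp.\ $q^{-1}g(x)$) is squarefree exactly when each irreducible factor is squarefree and the factors are pairwise coprime; two distinct irreducible factors can share only primes dividing their (fixed, nonzero) resultant, so pairwise coprimality is a finite-prime condition that I fold into the residue class.

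For parts (1) and (3) I would prove that a positive proportion of $x\le X$ in the chosen class have every irreducible factor squarefree. The essential feature is that each irreducible factor has degree at most $3$: for such a factor, $\#\{x\le X: p^2\mid f_i(x)\text{ for some }p>z\}=o(X)$ as $z\to\infty$, which is elementary for the quadratic factors and is Hooley's theorem on squarefree values for the cubic factor $f_3$ (and for $g_2,g_3$). A short inclusion--exclusion over the primes $p\le z$ then produces a main term $X\prod_p\bigl(1-\rho(p^2)/p^2\bigr)>0$, the local factors being positive because each relevant polynomial assumes, modulo every prime power $p^2$, values not divisible by $p^2$ (a finite verification at small primes). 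This yields the bound $\gg_q X$.

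For parts (2) and (4) I would apply a \emph{weighted} lower-bound sieve to $\{f(x):x\le X\}$ (resp.\ $\{q^{-1}g(x):x\le X\}$) in the fixed class. The sequence has sieve dimension $3$: the three irreducible factors each contribute on average one root modulo $p$ (Frobenius/prime ideal theorem), so $\sum_{p<z}\rho(p)\log p/p\sim3\log z$ and $\prod_{p<z}\bigl(1-\rho(p)/p\bigr)\asymp(\log X)^{-3}$, which is the source of the factor $(\log X)^{-3}$. Its level of distribution is $X^{1-\varepsilon}$, since $\#\{x\le X:d\mid f(x)\}=\rho(d)X/d+O(\rho(d))$ with $\rho(d)\ll_\varepsilon d^\varepsilon$. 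Feeding these two inputs into the Diamond--Halberstam--Richert weighted sieve for dimension $3$, level $\theta\to1$, and degree $7$ (resp.\ $5$), and optimizing the weight parameters, gives a lower bound $\gg X/(\log X)^3$ for the number of $x$ with $f(x)$ (resp.\ $q^{-1}g(x)$) having at most $13$ (resp.\ $12$) prime factors. Squarefreeness comes almost for free: the weight construction already begins by removing prime factors below a fixed power $z=X^{c}$, so no small prime square can divide the value, while $\#\{x\le X:p^2\mid f(x)\text{ for some }p\ge z\}\ll X^{1-\delta}$ by the degree-$\le3$ tail bound and is negligible against $X/(\log X)^3$.

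The hard part is twofold. The genuine arithmetic input is the squarefree tail estimate for the cubic factor, namely Hooley's bound on $\#\{x\le X:p^2\mid f_3(x)\text{ for some large }p\}$; this is precisely why the construction is arranged so that every irreducible factor has degree at most $3$, since a degree-$4$ factor would force a dependence on the $abc$ conjecture. The second, more technical, point is to confirm that the dimension-$3$ weighted sieve with level of distribution arbitrarily close to $1$ does produce values with at most $13$ (resp.\ $12$) prime factors while retaining a lower bound of the conjectured order $X/(\log X)^3$; this is a numerical optimization of the sieve weights, for which I would quote the weighted sieve of Diamond, Halberstam and Richert rather than recompute the constants by hand.
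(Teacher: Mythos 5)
Your proposal takes a genuinely different route from the paper: the paper's proof of this lemma is essentially a citation. It quotes from the Booker--Browning preprint \cite{bb} the two facts that, for a squarefree, content-one $h\in\Z[x]$ with $k$ irreducible factors all of degree at most $3$ and satisfying a local condition ($p\nmid h(a)$ for some $a$ and all $p\le\deg h$), one has $\#\{x\le X:h(x)\in T_\infty\}=(c+o(1))X$ and $\#\{x\le X:h(x)\in T_r\}\gg X/(\log X)^k$ with $r=13$ for $(k,\deg h)=(3,7)$ and $r=11$ for the degree-$5$ case. The only work done in the paper is to make the arithmetic conditions on $q$ automatic by a substitution rather than by your residue-class/CRT argument: for (1)--(2) it applies the quoted results to $h(x)=f(qx)$, whose irreducible factors all have constant term $1$ and are therefore $\equiv1$ modulo every $p\mid q$; for (3)--(4) it takes $h(x)=Q^{-1}g(Q+Q^2x)$ with $Q=\lcm(q,2)$, which forces $(g(Q+Q^2x),q^2)=q$ identically and costs at most one extra prime factor (whence $12=11+1$). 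Your CRT reduction is a workable alternative for this bookkeeping step; the real difference is that you then attempt to reprove the \cite{bb} input from scratch via Hooley's method plus the Diamond--Halberstam--Richert weighted sieve.

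Within that reproof there is a genuine gap, at exactly the point that makes \cite{bb} a nontrivial paper: the claim that squarefreeness ``comes almost for free'' in parts (2) and (4) because $\#\{x\le X:p^2\mid f(x)\text{ for some }p\ge z\}\ll X^{1-\delta}$ ``by the degree-$\le3$ tail bound.'' No such power-saving bound follows elementarily from the degree restriction for the cubic factors: since $f_3(x)\ll X^3$, the primes $p$ with $p^2\mid f_3(x)$ can be as large as $X^{3/2}$, and in the critical range $p\gg X^{1-\delta}$ the trivial bound is $\gg X/\log X$ while Hooley's theorem gives only $o(X)$ (with at best a small logarithmic saving); a genuine power saving there is a substantial recent result, not a routine tail estimate. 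Consequently you cannot simply intersect the sieve output (of size $\asymp X/(\log X)^3$) with the squarefree set by subtracting an exceptional set that you have only bounded by $o(X)$ --- the exceptional set could a priori swamp the main term. Making the squarefree condition compatible with the dimension-$3$ weighted sieve lower bound is the technical heart of the result you are invoking, and your sketch does not supply it; likewise the admissible values $13$ and $12$ are asserted rather than derived from the sieve optimization. If you intend to quote the literature for these points, the honest thing is to quote \cite{bb} directly, which is what the paper does.
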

\begin{proof}
Let $h\in\Z[x]$ be a squarefree polynomial with $k$
irreducible factors and content $1$, and suppose that
there exists $a\in\Z$ such that
$p\nmid h(a)$ for every prime $p\le\deg{h}$.
Then it was shown in \cite{bb} that if every irreducible
factor of $h$ has degree at most $3$ then there are positive numbers
$c=c(h)$ and $r=r(k,\deg{h})$ such that
$$
\#\{x\in\Z\cap[1,X]:h(x)\in T_\infty\}=(c+o(1))X
\quad\mbox{as }X\to\infty,
$$
and
$$
\#\{x\in\Z\cap[1,X]:h(x)\in T_r\}\gg_h\frac{X}{(\log{X})^k}
\quad\mbox{for }X\gg_h 1.
$$
Further, for $k=3$ and $\deg{h}=7$ we may take $r=13$.
Thus, (1) and (2) follow on applying these results to $h(x)=f(qx)$.

For (3) and (4) we set $Q=\lcm(q,2)$ and take
$h(x)=Q^{-1}g(Q+Q^2x)$. Then $h\in\Z[x]$, and if $a\in\Z$ is such that
$$
Qa\equiv-1\pmod*{\tfrac{15}{(q,15)}},
$$
then $(h(a),30)=1$. From \cite{bb} we find that $r=11$ is admissible for $h$,
from which (3) and (4) follow.
\end{proof}

\begin{theorem}\label{thm:manypairs}\hspace{1pt}
\begin{enumerate}
\item For any $q\in\Z_{>0}$, there are infinitely many positive integers
$k$ such that $\PP{k}^2$ contains an irreducible pair of modulus co-prime
to $q$.
\item There is a positive integer $k\le13$ such that, for any
$q\in\Z_{>0}$, $\PP{k}^2$ contains infinitely many irreducible pairs of
modulus co-prime to $q$.
\item For any squarefree $q\in\Z_{>0}$, there are infinitely many positive
integers $k$ such that $\PP{k}^2$ contains an irreducible pair of modulus
divisible by $q$, and the least such $k$ is at most $\omega(q)+12$.
\end{enumerate}
\end{theorem}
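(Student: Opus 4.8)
The plan is to combine three ingredients already in hand: the explicit one-parameter families of integer triples satisfying \eqref{eqn:triple} furnished by Theorem~\ref{thm:fibonacci}, the embedding of a $k$-tuple congruence system into a $K$-tuple system for $K>k$ provided by Lemma~\ref{lem:embed}, and the almost-prime counts of Lemma~\ref{lem:sqfree}. Concretely, taking $n=3$ in the first line of \eqref{eqn:parametric} gives the triple $(P_1,P_2,P_3)=(x^2+x+1,\,x^2+1,\,x^3+x^2+2x+1)$, whose product is $f(x)$; and taking $n=2$, $\delta=1$ in the second line gives $(P_1,P_2,P_3)=(x,\,x^2-x+1,\,x^2+1)$, whose product is $g(x)$. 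In each case $(P_1,P_2,P_3)$ satisfies \eqref{eqn:triple} for every $x\in\Z$, so the hypothesis of Lemma~\ref{lem:embed} holds with $k=3$ and $\pi=(13)$; and since $P_1\ne P_3$ for $x\ge2$, the irreducibility condition $P_1\cdots P_i\ne Q_1\cdots Q_i$ ($0<i<3$) holds as well. Thus, whenever the relevant product is squarefree, factoring the three entries into primes and applying Lemma~\ref{lem:embed} produces an irreducible pair $(P,\Pi.P)\in\PP{K}^2$ with $K=\omega(f(x))$ (resp.\ $\omega(g(x))$) and modulus $f(x)$ (resp.\ $g(x)$).

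For part (1), I would invoke Lemma~\ref{lem:sqfree}(1) to obtain $\gg_q X$ values $x\in[1,X]$ with $f(x)$ squarefree and coprime to $q$; each yields an irreducible pair in $\PP{K}^2$ of modulus coprime to $q$, with $K=\omega(f(x))$. It then remains to see that $K$ takes infinitely many values over this set, which follows because the normal order of $\omega$ along the values of a squarefree polynomial is $\log\log x$ (a classical Erd\H{o}s--Tur\'an--Kubilius type result): for each fixed $M$ the set $\{x\le X:\omega(f(x))\le M\}$ has density zero, whereas the good $x$ have positive lower density, so some good $x$ has $\omega(f(x))>M$. Part (3) is entirely parallel, using $g$ and Lemma~\ref{lem:sqfree}(3): for fixed squarefree $q$ these give $\gg_q X$ values $x$ with $g(x)$ squarefree and $(g(x),q^2)=q$, so $q\mid g(x)$ and $g(x)/q$ is coprime to $q$; the resulting pairs have modulus $g(x)$ divisible by $q$ and $K=\omega(g(x))=\omega(q)+\omega(g(x)/q)$, again unbounded (indeed already $\omega(g(x))\ge\omega(x)$). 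For the bound on the least admissible $k$, Lemma~\ref{lem:sqfree}(4) supplies at least one such $x$ with $q^{-1}g(x)\in T_{12}$, whence $K=\omega(q)+\omega(g(x)/q)\le\omega(q)+12$.

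Part (2) is the most delicate, because the claimed $k\le13$ must be uniform in $q$, whereas Lemma~\ref{lem:sqfree} only counts $x$ after $q$ has been fixed. Here I would argue by a double application of the pigeonhole principle. Fixing a primorial $Q_N=\prod_{i\le N}p_i$ and applying Lemma~\ref{lem:sqfree}(2) with $q=Q_N$ produces infinitely many $x$ with $f(x)\in T_{13}$ and $(f(x),Q_N)=1$; since $\omega(f(x))\in\{1,\ldots,13\}$ for these $x$, some value $k_N\le13$ is attained infinitely often. As $N$ ranges over $\Z_{>0}$ the $k_N$ lie in a fixed finite set, so a single value $k^*\le13$ occurs for infinitely many $N$. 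This $k^*$ works for every $q$: given $q$, choose $N$ large enough that $\mathrm{rad}(q)\mid Q_N$ and $k_N=k^*$, and observe that the infinitely many irreducible pairs in $\PP{k^*}^2$ of modulus coprime to $Q_N$ are then automatically coprime to $q$.

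The main obstacle is precisely this uniformity in part (2): extracting one $k$ good for all $q$ is not formal and requires the secondary pigeonholing over primorials above, since the counts of Lemma~\ref{lem:sqfree} carry a $q$-dependent implied constant. The only other genuinely non-formal input is the unboundedness of $\omega$ along the polynomial values $f(x)$ and $g(x)$, for which I rely on the classical normal-order result for additive functions at squarefree polynomial arguments; everything else is a direct assembly of Theorem~\ref{thm:fibonacci}, Lemma~\ref{lem:embed} and Lemma~\ref{lem:sqfree}.
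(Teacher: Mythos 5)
Your proposal is correct and follows essentially the same route as the paper: the same two triples from Theorem~\ref{thm:fibonacci}, the same embedding via Lemma~\ref{lem:embed} with $\pi=(13)$, and the same counts from Lemma~\ref{lem:sqfree}, with a Halberstam/Erd\H{o}s--Kac normal-order input forcing $\omega(f(x))$ to be unbounded on a positive-density set. The only (logically equivalent) difference is in part (2), where you extract the uniform $k\le13$ by a double pigeonhole over primorials, whereas the paper argues by contradiction, taking $q=q_1\cdots q_{13}$ to be the product of hypothetical bad moduli and applying Lemma~\ref{lem:sqfree}(2) once.
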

\begin{remark}\hspace{1pt}
\begin{itemize}
\item
Combining (1) and (2) with Lemma~\ref{lem:NP} and the Chinese remainder
theorem, we see that if $a,q,k\in\Z_{>0}$,
then for a positive proportion
of the numbers $n\equiv a\pmod*{q}$, $G_n$ contains both a loop of height
$\le13$ and a loop of height $\ge k$.
If $(a,q)=1$ then the same assertion holds with $n$ restricted to primes.
\item
Similarly, by (3), for any squarefree $q\in\Z_{>0}$ there is a prime $n$
such that $G_n$ contains a loop of height $\le\omega(q)+12$ that has
every prime factor of $q$ as an edge. In particular, every prime occurs
as an edge of a loop in some $G_n$.
\end{itemize}
\end{remark}
\begin{proof}
Let $f(x)$ be as in Lemma~\ref{lem:sqfree}.
Suppose that $f(x)$ is squarefree for some $x\in\Z_{>0}$, and put
$$
(P_1,P_2,P_3)=(x^2+x+1,x^2+1,x^3+x^2+2x+1).
$$
Then the $P_i$ are squarefree and pairwise co-prime. By
Theorem~\ref{thm:fibonacci}, $(P_1,P_2,P_3)$ satisfies \eqref{eqn:triple},
and applying Lemma~\ref{lem:embed} with $\pi=(13)$, we obtain
an irreducible pair $(P,\Pi.P)\in\PP{K}^2$, where $|P|=f(x)$ and
$K=\omega(f(x))$. (Recall that $\omega(n)$ denotes the number of distinct
prime factors of $n$.)

Now, to prove (1), we construct a sequence of positive integers $x_i$ as
follows. Assume that $x_1,\ldots,x_{i-1}$ have been chosen, and set
$$
r=\begin{cases}
0&\mbox{if }i=1,\\
\omega(f(x_{i-1}))&\mbox{if }i>1.
\end{cases}
$$
It was shown by Halberstam \cite{halberstam} that, for any irreducible
polynomial $h\in\Z[x]$,
$\frac{\omega(h(x))-\log\log{x}}{\sqrt{\log\log{x}}}$
has a Gaussian distribution, as in the Erd\H{o}s--Kac theorem. Taking
$h$ to be one of the irreducible factors of $f$,
we have in particular that
$$
\#\{x\in\Z\cap[1,X]:f(x)\in T_r\}
\le\#\{x\in\Z\cap[1,X]:h(x)\in T_r\}
=o(X)\quad\mbox{as }X\to\infty.
$$
Thus, by part (1) of Lemma~\ref{lem:sqfree}, we may choose
$x_i\in\Z_{>0}$ such that $(f(x_i),q)=1$, $f(x_i)$ is squarefree and
$\omega(f(x_i))>r$.

Hence, for the sequence of $x_i$ thus constructed,
$\omega(f(x_i))$ is strictly increasing.  By the
above, for each $i$, $\PP{\omega(f(x_i))}^2$ contains an irreducible
pair of modulus $f(x_i)$, and (1) follows.

Turning to (2), suppose that there is no such $k$. Then for each
$k=1,\ldots,13$, there exists $q_k\in\Z_{>0}$ such that $\PP{k}^2$
contains at most finitely many irreducible pairs of modulus co-prime to
$q_k$, and replacing $q_k$ by a suitable multiple if necessary,
we may assume that there are no such pairs.
Applying part (2) of Lemma~\ref{lem:sqfree} with $q=q_1\cdots q_{13}$,
there exists $x\in\Z_{>0}$ such that $f(x)\in T_{13}$ and $(f(x),q)=1$.
By the above construction, we obtain an irreducible pair 
$(P,\Pi.P)\in\PP{K}^2$ of modulus co-prime to $q$, where
$K=\omega(f(x))\le13$. This is a contradiction, and (2) follows.

Finally, (3) is proved in much the same way using the triple
$$
(P_1,P_2,P_3)=(x,x^2-x+1,x^2+1),
$$
corresponding to the second line of Theorem~\ref{thm:fibonacci} with $n=2$,
and $g(x)$ in place of $f(x)$; we omit the details.
\end{proof}

\subsection{Multiple $k$-tuples with small modulus}
\label{sec:smallmodulus}
One could continue as in Propositions \ref{prop:triple} and
\ref{prop:quadruple} to classify the multiple $k$-tuples for
$k=5,6,\ldots$, but as the proof of Proposition~\ref{prop:quadruple}
shows, this quickly becomes cumbersome. A more practical means
of identifying relatively dense arithmetic progressions $N(P)$ of nodes
giving rise to loops is to do a direct search for small
values of $|P|$.

One procedure for finding all multiple $k$-tuples of a given modulus
is as follows. Suppose that $m$ is a squarefree positive
integer (our candidate for $|P|$), and
rewrite the system of congruences in
Definition~\ref{def:equiv} as
\begin{equation}\label{eqn:recast}
p_1\cdots p_{i-1}\equiv d_i\pmod*{p_i},
\end{equation}
where $d_1,\ldots,d_k$ are proper divisors of $m$ satisfying
\begin{equation}\label{eqn:divisibility}
d_i\ne d_j\mbox{ and }\min(d_i,d_j)\mid\max(d_i,d_j)
\end{equation}
for all $i\ne j$.
(If we wish to find only irreducible pairs, then we impose the further
constraint $d_i\ne p_1\cdots p_{i-1}$.)
We search for solutions to \eqref{eqn:recast} recursively:
suppose that $p_1,\ldots,p_{i-1}$ and $d_1,\ldots,d_{i-1}$ have been
chosen, loop over all proper divisors $d_i$ of $m$ such that
\eqref{eqn:divisibility} holds for all $j<i$, and then over
all primes $p_i\mid\frac{m}{p_1\cdots p_{i-1}}$
such that \eqref{eqn:recast} holds.
Since \eqref{eqn:recast} is a very restrictive condition,
most branches of the search tree are pruned quickly,
so this method is substantially more efficient than naively trying
all permutations of the prime factors of $m$.

We coded this procedure and used it to find $195167$ (unordered)
irreducible pairs of modulus $|P|<10^9$.  The results reveal that for
large $k$, topologies that are much more intricate than the simple loops
observed in Propositions~\ref{prop:triple} and \ref{prop:quadruple}
can arise.  For instance, for any $n\equiv58183403\pmod*{635825190}$,
$G_n$ has a subgraph as shown in Figure~\ref{fig:Gn}, in which there are
$7$ paths between $n$ and $635825190n$, $12$ out of the $21$ pairs of
paths are irreducible, and there are subloops of heights $3$, $4$, $5$,
$6$ and $8$.
\begin{figure}[h!]
\begin{center}
\includegraphics[width=0.8\textwidth]{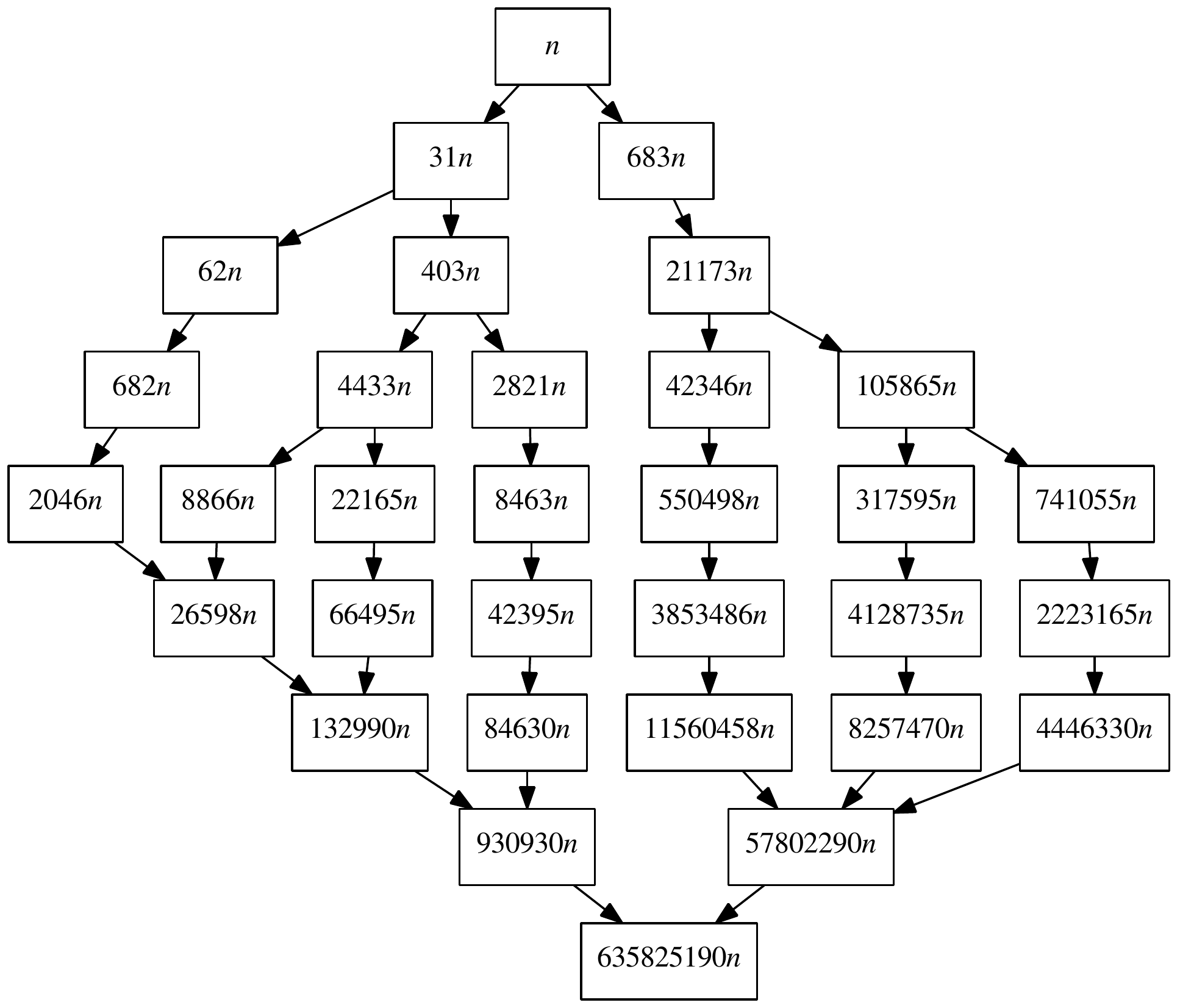}
\end{center}
%% \begin{verbatim}
%%                              n
%%                   __________/ \_________
%%                  /                      \
%%               31n                        683n
%%              /   \                        |
%%           62n     403n                  21173n
%%          /       /    \                /      \
%%      682n   4433n      2821n      42346n      105865n
%%     /      /     \       |         /         /       \
%%   2046n  8866n 22165n  8463n   550498n   317595n   741055n
%%      \    |       |      |        |         |         |
%%       26598n   66495n 42395n   3853486n  4128735n 2223165n
%%             \     |      |        |         |         |
%%              132990n  84630n  11560458n  8257470n 4446330n
%%                     \    |             \    |    /
%%                      930930n            57802290n
%%                             \          /
%%                              635825190n
%% \end{verbatim}
\caption{\label{fig:Gn}Some nodes of $G_n$ for
$n\equiv58183403\pmod*{635825190}$}
\end{figure}

Note that only pairs of modulus co-prime to $2\cdot3\cdot7\cdot43$
can possibly appear in $G_1$. Imposing that restriction reduces the
list to just $18$ moduli $|P|<10^9$ with $42$ associated arithmetic
progressions $N(P)$, as shown in Table~\ref{tab:smallmodulus}.
Consider, for instance, the progressions with modulus
$115908845=5\cdot13\cdot23\cdot31\cdot41\cdot61$. It is known (see the
introduction of \cite{booker}) that none of these primes can occur as
an edge of the right-most branch of $G_1$ (sequence {\tt A000946}).
Therefore, it seems natural to expect the nodes of the right-most branch
to vary randomly among the invertible residue classes mod $115908845$
as the level increases, in the sense that each residue class should
occur with equal frequency. (This is the same heuristic reasoning as that
supporting Shanks' conjecture \cite{shanks} that the first Euclid--Mullin
sequence contains every prime.) Thus, we would expect one of
the four corresponding residue classes in Table~\ref{tab:smallmodulus}
to occur with frequency $4/\varphi(115908845)=1/19008000$. In
particular, we are led to the following conjecture:
\begin{conjecture}
$G_1$ contains infinitely many loops.
\end{conjecture}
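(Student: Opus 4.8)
The plan is to reduce the conjecture to an equidistribution statement for the nodes of $G_1$ and then combine it with the infinite supply of irreducible pairs constructed in \S\ref{sec:largek}. By Lemma~\ref{lem:NP}, $G_1$ contains a loop of height $k$ based at a node $m$ precisely when $m\in N(P)$ for some irreducible pair $(P,Q)\in\PP{k}^2$; since every node of $G_1$ is divisible by $2\cdot3\cdot7\cdot43$ and no prime may repeat along a path, the admissible pairs are exactly those with $(|P|,2\cdot3\cdot7\cdot43)=1$. It therefore suffices to produce infinitely many such pairs together with, for each, a node of $G_1$ lying in its progression $N(P)=\{m\equiv a(P)\pmod*{|P|}\}$.

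The pairs are already available. By Theorem~\ref{thm:manypairs}(2) with $q=2\cdot3\cdot7\cdot43$ there is a fixed $k\le13$ and infinitely many irreducible pairs $(P_i,Q_i)$ with $(|P_i|,q)=1$; alternatively one may use the triples of Proposition~\ref{prop:triple}, parametrized through Theorem~\ref{thm:fibonacci}. After passing to a subsequence we may take the moduli $|P_i|$ distinct and increasing, with each $a(P_i)$ invertible modulo $|P_i|$. These progressions $N(P_i)$ are the targets; because their moduli are distinct, any nodes of $G_1$ landing in them yield distinct loops, so it is enough for the node set of $G_1$ to meet $N(P_i)$ for infinitely many $i$. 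That this can happen is not vacuous: Theorem~\ref{thm:main} is precisely one such coincidence, the product of the seventeen common edge-primes of either path being a node of $G_1$ lying in a progression $N(P)$ for the Case~III pair $(73,5,13,593)$ of Table~\ref{tab:quadruple}.

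The crux is to show that $G_1$ meets infinitely many of these targets, and here the natural tool is equidistribution. Fixing a branch of $G_1$---say the largest-prime-factor branch, the sequence {\tt A000946}---one expects the partial products $m_1,m_2,\ldots$ to be equidistributed among the invertible classes modulo any $M$ coprime to $2\cdot3\cdot7\cdot43$, each class occurring with natural density $1/\varphi(M)$. Applied with $M=|P_i|$ this forces the invertible residue $a(P_i)$ to be attained, producing a loop for every $i$ and hence infinitely many. The branching of $G_1$ above level $4$ only strengthens the heuristic, since it multiplies the number of nodes available to land in each target.

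The main obstacle is that this equidistribution lies far beyond current technology. It is the same heuristic that supports Shanks' conjecture \cite{shanks} and Mullin's question \cite{mullin} for the first Euclid--Mullin sequence, both still open; at present one cannot even show that a single prescribed invertible class modulo $M$ is attained, let alone that all are attained with the correct frequency. Worse, it is not known whether the number of nodes of $G_1$ at level $r$ grows at all, so there is no unconditional lower bound on the supply of candidate base nodes. Consequently the realistic target is a conditional theorem---that a suitable equidistribution hypothesis for the nodes of $G_1$ implies infinitely many loops---along the lines sketched above; an unconditional proof would require a genuinely new handle on the arithmetic of the Euclid--Mullin process, which is why the statement is recorded only as a conjecture.
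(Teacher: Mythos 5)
This statement is a conjecture: the paper offers no proof, only a heuristic justification, and your proposal is likewise (and admittedly) not a proof, since it bottoms out in an equidistribution hypothesis for the nodes of $G_1$ that is far out of reach. Judged as a heuristic, your argument is essentially the paper's own: identify arithmetic progressions $N(P)$ attached to irreducible pairs with $(|P|,2\cdot3\cdot7\cdot43)=1$ (the reduction via Lemma~\ref{lem:NP} and the coprimality restriction are both exactly as in \S\ref{sec:smallmodulus}), and then argue that the nodes of $G_1$ should fall into invertible residue classes with the expected frequency. The one substantive difference is in how the infinitude is extracted. You take infinitely many \emph{distinct} moduli $|P_i|$ from Theorem~\ref{thm:manypairs}(2) and ask that each progression be met once; the paper instead fixes a \emph{single} modulus, e.g.\ $115908845=5\cdot13\cdot23\cdot31\cdot41\cdot61$ from Table~\ref{tab:smallmodulus}, and expects its few residue classes to be met infinitely often. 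The paper's choice is not arbitrary: those six primes are known (via the introduction of \cite{booker}) never to occur as edges of the right-most branch, which is precisely what licenses the Shanks-type equidistribution heuristic \cite{shanks} along that branch indefinitely. Your variant has a wrinkle you should acknowledge: for a general modulus $|P_i|$, some prime factor of $|P_i|$ may eventually occur as an edge of whichever branch you follow, after which every subsequent node on that branch is divisible by it and hence permanently excluded from $N(P_i)$ (elements of $N(P_i)$ are coprime to $|P_i|$ by Lemma~\ref{lem:NP}(1)). One can plausibly rescue this by exploiting the branching of the full graph, but the paper's formulation sidesteps the issue entirely, and its single-modulus version also yields a quantitative prediction (density $4/\varphi(115908845)$) that your version does not. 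Neither argument is, or could currently be made into, a proof.
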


\begin{table}[h!]
\begin{tabular}{l|l|l|l}
$\{p_1,\ldots,p_k\}$ & $|P|$ & $a(P)$ & $1/\mathrm{density}$\\ \hline
$\{5,13,73,593\}$ & $2813785$ & $1125513$, $1861426$ & $1022976$\\
$\{5,11,13,79,523\}$ & $29541655$ & $2913109$, $19876614$ & $9771840$\\
$\{5,13,17,53,563\}$ & $32972095$ & $45473$, $14501753$, $15173846$, $15665474$ & $5611008$\\
$\{5,11,23,31,1307\}$ & $51254005$ & $29374824$, $37354844$ & $17239200$\\
$\{5,13,23,31,41,61\}$ & $115908845$ & $30432518$, $43262953$, $74975328$, $87805763$ & $19008000$\\
$\{197,211,2969\}$ & $123412423$ & $114015537$ & $122162880$\\
$\{5,11,23,67,1831\}$ & $155186405$ & $92870549$ & $106286400$\\
$\{5,13,19,23,71,89\}$ & $179491195$ & $106001778$, $120823468$, $140339224$, $145796156$ & $29272320$\\
$\{5,13,29,61,1597\}$ & $183631045$ & $16718992$, $26947777$, $40801752$, $51030537$ & $32175360$\\
$\{5,11,13,41,73,113\}$ & $241819435$ & $31106978$, $108457851$ & $77414400$\\
$\{5,11,13,17,97,233\}$ & $274715155$ & $161397329$, $273388114$ & $85524480$\\
$\{5,11,13,733,773\}$ & $405125435$ & $26064013$, $332551556$ & $135624960$\\
$\{5,11,19,41,83,127\}$ & $451629145$ & $16717776$, $363759119$ & $148780800$\\
$\{5,13,19,23,37,449\}$ & $471892265$ & $331562178$, $399028904$ & $153280512$\\
$\{5,19,53,337,421\}$ & $714350695$ & $171690041$, $516232304$ & $264176640$\\
$\{5,11,19,53,71,199\}$ & $782534665$ & $504298018$, $599617009$ & $259459200$\\
$\{11,19,29,71,1871\}$ & $805149301$ & $159790883$, $664072158$ & $329868000$\\
$\{5,11,23,61,67,167\}$ & $863399185$ & $132876677$, $201396989$ & $289238400$
\end{tabular}
\caption{\label{tab:smallmodulus}Multiple $k$-tuples of modulus
$|P|<10^9$ with $(|P|,2\cdot3\cdot7\cdot43)=1$}
\end{table}

More generally, it seems likely that each of the residue classes
$N(P)$ in Table~\ref{tab:smallmodulus} will be met infinitely often
by the nodes of $G_1$; we provide some evidence towards this in the
next section.  It is difficult to compute the overall probability
of a random node on the graph landing in one of the residue classes,
since these events are not independent, i.e.\ the classes overlap in
non-trivial ways. However, it is apparent from the first few lines of
the table that the greatest chance of finding a loop comes from the
progressions of modulus $2813785=5\cdot13\cdot73\cdot593$, with density
$2/\varphi(2813785)=1/1022976$.  Thus, on the sub-graph of nodes co-prime
to $2813785$, we expect roughly one out of every million nodes to be
the base of a loop of height $4$.

\section{Numerical results}\label{sec:numerics}
We used two methods for exploring $G_1$ numerically. First, we used freely
available software implementations of the elliptic curve method (see {\tt
GMP-ECM} \cite{gmp-ecm}) and general number field sieve (see {\tt YAFU}
\cite{yafu}, {\tt msieve} \cite{msieve} and {\tt GGNFS} \cite{ggnfs})
to compute as many nodes as was practical for levels up to 17. This was
a community effort, with support from users of {\tt mersenneforum.org}.

\begin{table}[h!]
\begin{center}
\begin{tabular}{rrr|rrr}
level&nodes&composites&level&nodes&composites\\
\hline
$\le4$&1&0 & 11&     555&      0\\
5&2&0      & 12&    2020&      0\\
6&4&0      & 13&    7948&      1\\
7&9&0      & 14&   32738&      8\\
8&24&0     & 15&  141619&    636\\
9&52&0     & 16&  622317&  13445\\
10&165&0   & 17& 2550301& 186060\\
\end{tabular}
\end{center}
\caption{\label{tab:g1}Number of nodes by level in $G_1$.}
\end{table}

Table~\ref{tab:g1} lists the number of nodes that we have computed at
each level of the graph $G_1$. The final column is the number of remaining
unfactored composites at that level.  Factoring a composite at a given
level will increase the number of nodes at that level (by at least 2)
and all subsequent levels.  The single remaining composite at level 13
is the 253-digit:
\begin{quotation}
\noindent\scriptsize 3\-0\-7\-4\-1\-6\-3\-8\-0\-4\-1\-2\-6\-3\-7\-5\-7\-3\-0\-9\-6\-0\-0\-4\-6\-0\-0\-0\-0\-0\-6\-4\-1\-0\-7\-0\-3\-2\-9\-9\-8\-6\-0\-4\-9\-1\-0\-5\-2\-5\-1\-5\-3\-9\-9\-3\-5\-2\-2\-0\-4\-3\-8\-9\-4\-9\-4\-5\-6\-5\-4\-2\-4\-6\-2\-2\-7\-6\-8\-9\-3\-1\-0\-8\-0\-6\-0\-5\-6\-5\-2\-5\-7\-9\-8\-3\-2\-7\-4\-8\-9\-1\-5\-8\-7\-9\-8\-6\-5\-5\-1\-9\-9\-9\-3\-6\-6\-9\-1\-6\-1\-3\-1\-4\-9\-5\-1\-6\-4\-9\-5\-9\-3\-7\-6\-3\-2\-4\-5\-4\-6\-4\-9\-9\-5\-9\-6\-6\-6\-2\-7\-3\-0\-8\-1\-9\-9\-5\-3\-4\-4\-6\-8\-6\-0\-7\-1\-8\-4\-3\-8\-4\-7\-4\-4\-2\-5\-7\-5\-7\-3\-6\-8\-5\-6\-8\-3\-2\-2\-1\-6\-1\-1\-4\-4\-0\-2\-0\-2\-8\-0\-6\-2\-2\-2\-7\-2\-5\-7\-2\-7\-0\-8\-3\-2\-2\-4\-7\-5\-6\-0\-1\-0\-6\-3\-5\-1\-6\-4\-7\-0\-0\-1\-4\-4\-4\-9\-9\-2\-2\-5\-5\-1\-2\-7\-9\-9\-3\-4\-3\-8\-0\-7.
\end{quotation}
We have been unable to factor this number despite running {\tt GMP-ECM}
on approximately 225,000 curves at $B_1=8.5\times 10^8$ and
44,000 curves at $B_1=3\times 10^9$ (and default $B_2$ values); this is
a level of effort comparable to a ``{\tt t70}'', meaning it has a
reasonable chance of revealing any prime factors with up to 70 digits.

The numbers appearing in Theorem~\ref{thm:main} were found by
checking our data for the residue classes in
Table~\ref{tab:smallmodulus}.
If there is a lower node with multiple
paths to $1$ then there must be a loop of height $k$ starting from some
node of level $\le 20-k$, viz.\ at most $17$ if $k=3$ and $16$ if $k\ge4$.
Although we have not been able to compute the full graph up to level $17$,
we expect that there are no more than one million nodes remaining to
be found up to that level, with at most $50$ thousand of those at level $16$
or below (and fewer still that are co-prime to $5$ and $13$).
In view of Table~\ref{tab:smallmodulus}, it seems unlikely
that one of those will yield a loop. However, that cannot be established
definitively until the full graph is computed up to level $18$, which
is out of reach with present technology.

Our second numerical method aimed to produce large quantities of nodes
rather than a comprehensive list of all of them. We began with our
list of nodes at level 17 and followed only the edges corresponding to
primes below some bound $B$.  Taking $B=2^{24}$ and computing up to level
$50$, we found at least one match to every congruence class listed in
Table~\ref{tab:smallmodulus}; in particular we found loops of heights
$3$, $4$, $5$ and $6$. This method was also helpful for investigating
some other statistical questions, as we describe in the next section.

\section{Related questions}
In this final section, we record some numerical observations and
heuristics on related questions:
\begin{itemize}
\item\emph{Does every prime occur as an edge in $G_1$?}
This seems very likely. With the second method described above,
we verified that every prime below $10^9$ occurs.

\begin{table}
\begin{center}
\begin{tabular}{rl}
$k$&$\frac{X_{k+1}/X_k}{\sqrt{2k}}$\\\hline
$9$&$0.748$\\
$10$&$0.752$\\
$11$&$0.776$\\
$12$&$0.803$\\
$13$&$0.808$\\
$14$&$0.825$\\
\end{tabular}
\end{center}
\caption{\label{tab:Xk}Estimated values of
$\frac{X_{k+1}/X_k}{\sqrt{2k}}$}
\end{table}

\item\emph{How does the number of nodes at level $k$ grow
asymptotically as $k\to\infty$?}
Let $X_k$ denote the number of nodes of $G_1$ of level $k$.
Heuristically, based on the Erd\H{o}s--Kac theorem, we expect that
for a typical node $n$, $n+1$ will have about $\log\log{n}$ prime
factors, with the values of $\frac{\log\log{p}}{\log\log(n+1)}$ uniformly
distributed on $[0,1]$ as $p$ varies over the prime factors of $n+1$.

Let $n_k$ be the nodes of a typical path in $G_1$, with
$n_0=1$, and define $\theta_k$ so that
\begin{equation}\label{eqn:thetak}
\frac{n_{k+1}}{n_k}=\exp\bigl([\log(n_k+1)]^{\theta_k}\bigr).
\end{equation}
Then by the above heuristic, $\theta_k$ should vary uniformly over
$[0,1]$ as $k\to\infty$. If we instead treat the $\theta_k$ formally
as independent, uniform random variables on $[0,1]$ and define $n_k$ by
\eqref{eqn:thetak}, then it is not hard to see that
$$
\lim_{k\to\infty}\frac{\log\log{n_k}}{\sqrt{2k}}=1
$$
holds almost surely. Thus, we might expect the typical node of level $k$
to be of size $\exp\exp([1+o(1)]\sqrt{2k})$.  (This analysis ignores
the fact that $n_k+1$ is co-prime to $n_k$ and hence typically has no
small prime factors; however, in the random model, the bulk of the
contribution to $\log\log{n_k}$ comes from the values of $\theta_k$
close to $1$, corresponding to the large prime factors, so this
makes little difference.)  In turn, this leads to the conjecture
that $\frac{X_{k+1}}{X_k}=(1+o(1))\sqrt{2k}$, or equivalently
$\log X_k=\frac{k}2\bigl(\log\frac{2k}{e}+o(1)\bigr)$.

As far as we are aware, it is not even known that $X_k$ is unbounded,
so this remains largely guesswork. Table~\ref{tab:Xk} shows estimated
values of $\frac{X_{k+1}/X_k}{\sqrt{2k}}$
for $k\le 14$, based on the data in Table~\ref{tab:g1}. Although
the data are very limited, they are at least consistent with the above
guess, in that the ratio appears to grow slowly towards $1$.

\item\emph{Are there arbitrarily long chains of nodes with only one
child each?}
This is related to the previous two questions.
The basic heuristic underlying Shanks' conjecture is that the nodes
of a given path in $G_1$ should vary randomly among the invertible
residue classes modulo a fixed prime $p$, until $p$ occurs as an edge
(beyond which every node is divisible by $p$). One (perhaps the only)
conceivable way in which this heuristic might fail is if $n+1$ is prime
for every node $n$ of sufficiently large level along the path. In fact,
as discovered by Kurokawa and Satoh \cite{ks}, that \emph{can} happen
for the analogous question over $\mathbb{F}_p[x]$.

All numerics to date indicate that this pathology does not occur over
$\Z$, but it is an interesting question whether there are arbitrarily
long chains in $G_1$ of nodes $n$ such that $n+1$ is prime.
For a random node $n$, we can estimate the probability that $n+1$ is
prime as $\frac{n}{\varphi(n)\log{n}}$, so the chance
that there is a unique path of length $\ell$ descending from $n$ is
roughly
$$
\frac{n}{\varphi(n)\log{n}}\times
\frac{n}{\varphi(n)\log(n^2)}\times\cdots\times
\frac{n}{\varphi(n)\log(n^{2^{\ell-1}})}
=\left(\frac{n}{\varphi(n)2^{\frac{\ell-1}2}\log{n}}\right)^\ell
\gg_\ell(\log{n})^{-\ell}.
$$
As above, we expect the $n$ of level $k$ typically satisfy
$\log{n}=e^{O(\sqrt{k})}$.  Hence, if our asymptotic guess for $X_k$
holds then we should indeed expect chains of length $\ell$ to occur for
sufficiently large $k$, and in fact we might expect $\ell$ as large as
about $\sqrt{\frac{k\log{k}}{\log2}}$.
By our second method we found several examples
of nodes followed by a unique path of length $4$; the lowest (after the
root node $1$) is the following node at level $20$:
%\begin{quotation}
%\noindent\scriptsize
%3\-8\-4\-1\-1\-3\-9\-9\-1\-6\-4\-0\-3\-9\-7\-0\-8\-9\-6\-2\-1\-6\-8\-6\-2\-9\-8\-1\-7\-5\-3\-9\-4\-6\-9\-5\-5\-4\-9\-0\-0\-8\-0\-7\-3\-5\-2\-4\-9\-4\-4\-4\-6\-5\-7\-8\-9\-0\-8\-7\-5\-9\-5\-3\-1\-1\-2\-4\-9\-7\-3\-0\-9\-9\-7\-5\-8\-5\-0\-9\-6\-3\-1\-4\-7\-7\-0\-7\-8\-2\-3\-9\-4\-6\-4\-3\-3\-1\-1\-0\-1\-6\-9\-9\-8\-9\-0\-6\-9\-6\-7\-9\-1\-5\-8.
%\end{quotation}
\begin{align*}
2&\cdot3\cdot7\cdot43\cdot139\cdot50207\cdot1607\cdot38891
\cdot71609249149971437\cdot97272377313541\cdot318004829\\
&\cdot1555110880896883\cdot39807662109343\cdot53437\cdot35251\cdot79
\cdot2011283825921\cdot29\cdot17\cdot241.
\end{align*}

\item\emph{Is $G_1$ planar?}
Our search for multiple $k$-tuples of small modulus uncovered several
arithmetic progressions of $n$, e.g.\ $n\equiv93397\pmod*{510510}$, such
that $G_n$ is not planar. As a generalization of Theorem~\ref{thm:main},
it is a natural question whether $G_1$ itself is planar. However, despite
making an extended search, every progression that we found leading to
non-planar graphs had modulus divisible by $6$, and it is unclear whether
or not that is a necessary condition. In any case, if $G_1$ is non-planar,
that fact is likely not manifested until astronomically large level,
so this question is unlikely to be settled in the near future.
\item\emph{How does the number of irreducible pairs of modulus $\le X$ 
grow asymptotically as $X\to\infty$?}
The proof of Theorem~\ref{thm:manypairs} shows that, for large $X$,
\begin{equation}\label{eqn:modasymp}
\#\{q\in\Z\cap[1,X]:\exists\mbox{ an irreducible pair of modulus }q\}
\gg X^{1/5},
\end{equation}
and this gives a lower bound for the number of irreducible pairs of
modulus up to $X$. However, a log-log fit of our data up to $10^9$
suggests that this is too low, and that \eqref{eqn:modasymp} is perhaps
asymptotic to $cX^{5/8}$ for some $c>0$.  Note that the moduli exhibited
in the lower bound in \eqref{eqn:modasymp} are all even (for odd moduli
the proof of Theorem~\ref{thm:manypairs} gives only a lower bound $\gg
X^{1/7}$); our numerics also suggest that almost all irreducible pairs
have even modulus.
\end{itemize}

\begin{table}[h!]
\begin{center}
\begin{tabular}{rrrrl|rrrrl}
$n$&$p$&step&digits&OEIS&$n$&$p$&step&digits&OEIS\\
\hline
2&41&52&335&\tt A000945 & 47&23&36&194&\tt A051319\\
5&31&58&347&\tt A051308 & 53&71&92&526&\tt A051320\\
11&29&56&313&\tt A051309 & 59&37&79&1059&\tt A051321\\
13&17&58&353&\tt A051310 & 61&29&47&501&\tt A051322\\
17&37&31&232&\tt A051311 & 67&19&43&200&\tt A051323\\
19&43&73&922&\tt A051312 & 71&79&140&991&\tt A051324\\
23&29&62&515&\tt A051313 & 73&83&131&949&\tt A051325\\
29&67&80&566&\tt A051314 & 79&17&32&292&\tt A051326\\
31&29&38&240&\tt A051315 & 83&71&65&296&\tt A051327\\
37&59&77&826&\tt A051316 & 89&79&79&743&\tt A051328\\
41&43&56&933&\tt A051317 & 97&53&52&261&\tt A051330
\end{tabular}
\end{center}
\caption{\label{tab:em1}Summary of $M_n$ for $n<100$.}
\end{table}

Finally, we record the latest results on the computation of the
Euclid--Mullin sequence and some of its relatives. Let $M_n$ denote
the first Euclid--Mullin sequence starting with the prime $n$, i.e.\
the edges of the left-most path in $G_n$.  Wagstaff \cite{wagstaff}
computed $M_2$ up through the 43rd term (180 digits). Much computation
effort, including several large GNFS world-wide distributed efforts,
has since been expended on factoring the integers needed to extend
the sequence. In 2012, Ryan Propper found a 75-digit factor using ECM;
it remains the fifth largest factor ever produced by ECM.

Table~\ref{tab:em1} is a summary of known computational results for the
distinct sequences with $n<100$. The `$p$' column is the smallest prime
not yet confirmed as a member of the corresponding sequence. The `step'
column indicates the number of known terms and the `digits' column the
number of decimal digits in the unfactored composite needed for the next
step. The final column is the corresponding entry number in the OEIS.\@
It is unlikely that any of the blocking composites has a factor of less
than 45 digits.

\bibliographystyle{amsplain}

\begin{thebibliography}{10}

\bibitem{booker}
Andrew~R. Booker, \emph{On {M}ullin's second sequence of primes}, Integers
  \textbf{12A} (2012).

\bibitem{bb}
Andrew~R. Booker and T.~D. Browning, \emph{Square-free values of reducible
  polynomials}, preprint (2015), available at
	\url{http://arxiv.org/abs/1511.00601}.

\bibitem{bv}
Johannes Buchmann and Ulrich Vollmer, \emph{Binary quadratic forms}, Algorithms
  and Computation in Mathematics, vol.~20, Springer, Berlin, 2007, An
  algorithmic approach. \MR{2300780 (2008b:11046)}

\bibitem{ggnfs}
Chris~Monico et~al., \emph{{\tt GGNFS}}, available at
  \url{http://sourceforge.net/projects/ggnfs/}.

\bibitem{msieve}
Jason~Papadopoulos et~al., \emph{{\tt msieve}}, available at
  \url{http://sourceforge.net/projects/msieve/}.

\bibitem{yafu}
Ben~Buhrow et~al., \emph{{\tt YAFU}}, available at
  \url{http://sourceforge.net/projects/yafu/}.

\bibitem{gmp-ecm}
Paul~Zimmermann et~al., \emph{{\tt GMP-ECM}}, available at
  \url{https://gforge.inria.fr/projects/ecm/}.

\bibitem{halberstam}
H.~Halberstam, \emph{On the distribution of additive number-theoretic
  functions. {II}}, J. London Math. Soc. \textbf{31} (1956), 1--14. \MR{0073626
  (17,461d)}

\bibitem{ks}
Nobushige Kurokawa and Takakazu Satoh, \emph{Euclid prime sequences over unique
  factorization domains}, Experiment. Math. \textbf{17} (2008), no.~2,
  145--152. \MR{2433881 (2009k:11200)}

\bibitem{levy}
Dan Levy, \emph{The irreducible factorization of {F}ibonacci polynomials over
  {$\mathbf Q$}}, Fibonacci Quart. \textbf{39} (2001), no.~4, 309--319.
  \MR{1851529 (2002f:11016)}

\bibitem{mullin}
Albert~A. Mullin, \emph{Recursive function theory. (a modern look at a
  {E}uclidean idea)}, Research problems, Bulletin of the American Mathematical
  Society \textbf{69} (1963), no.~6, 737.

\bibitem{pollack}
Paul Pollack and Enrique Trevi{\~n}o, \emph{The primes that {E}uclid forgot},
  American Mathematical Monthly \textbf{121} (2014), no.~5, 433--437.

\bibitem{shanks}
Daniel Shanks, \emph{Euclid's primes}, Bull. Inst. Combin. Appl. \textbf{1}
  (1991), 33--36. \MR{1103634 (92f:11013)}

\bibitem{oeis}
N.~J.~A. Sloane, \emph{{The On-Line Encyclopedia of Integer Sequences}},
  published electronically at \url{http://oeis.org/}.

\bibitem{wagstaff}
Samuel~S. Wagstaff, Jr., \emph{Computing {E}uclid's primes}, Bull. Inst.
  Combin. Appl. \textbf{8} (1993), 23--32. \MR{1217356 (94e:11139)}

\end{thebibliography}
\providecommand{\bysame}{\leavevmode\hbox to3em{\hrulefill}\thinspace}
\providecommand{\MR}{\relax\ifhmode\unskip\space\fi MR }
% \MRhref is called by the amsart/book/proc definition of \MR.
\providecommand{\MRhref}[2]{%
  \href{http://www.ams.org/mathscinet-getitem?mr=#1}{#2}
}
\providecommand{\href}[2]{#2}

\end{document}